\newcommand{\R}{\mathbb{R}}
\newcommand{\un}{\mathbf{1}\!\!{\rm I}} 
\newcommand{\be}{\begin{equation}} 
\newcommand{\ee}{\end{equation}}
\newcommand{\bea}{\begin{eqnarray}} 
\newcommand{\eea}{\end{eqnarray}}
\newcommand{\bean}{\begin{eqnarray*}} 
\newcommand{\eean}{\end{eqnarray*}}
\newcommand{\rf}[1]{(\ref {#1})}
\def\dx{\,{\rm d}x}
\def\dy{\,{\rm d}y}
\def\ds{\,{\rm d}s}
\def\eps{\varepsilon}
\def\s{\sigma}
\def\p{\partial}
\def\xn{|\!|\!|}
\def\mn{|\!\!|}
\def\mn2{|\!\!|_{M^{d/2}}}
\newtheorem{theorem}{Theorem}
\newtheorem{proposition}[theorem]{Proposition}
\newtheorem{lemma}[theorem]{Lemma}
\newtheorem{corollary}[theorem]{Corollary}
\theoremstyle{definition}
\theoremstyle{remark}
\newtheorem{remark}[theorem]{Remark}
\numberwithin{equation}{section}
\numberwithin{theorem}{section}
\author[P. Biler]{Piotr Biler}
\address{\small Instytut Matematyczny, Uniwersytet Wroc\l awski,
 pl. Grunwaldzki 2/4, \hbox{50-384} Wroc\-\l aw, Poland}
\email{Piotr.Biler@math.uni.wroc.pl}
\author[G. Karch]{Grzegorz Karch}
\address{\small 
 Instytut Matematyczny, Uniwersytet Wroc\l awski,
 pl. Grunwaldzki 2/4, \hbox{50-384} Wroc\-\l aw, Poland}
\email{Grzegorz.Karch@math.uni.wroc.pl}
\author[D. Pilarczyk]{Dominika Pilarczyk}
\address{\small Wydzia{\l} Matematyki, Politechnika Wroc{\l}awska,
Wybrze\. ze Wyspia\'nskiego 37, Wroc{\l}aw, Poland} 
\email{Dominika.Pilarczyk@pwr.edu.pl}
\title[Solutions of  chemotaxis model]{Global radial solutions in classical \\
Keller--Segel model of chemotaxis} 
\begin{document}

\begin{abstract} 
We consider the simplest parabolic-elliptic model of  chemotaxis in the whole space in several dimensions. 
Criteria   for the existence of radial global-in-time solutions  in terms of suitable Morrey norms are derived. 
\end{abstract}

\keywords{chemotaxis,   global existence of solutions}

\subjclass[2010]{35Q92, 35B44, 35K55}

\date{\today}

\thanks{  
The  first named author  was partially supported by the NCN grant 
2016/23/B/ST1/00434.  \\
The authors thank Noriko Mizoguchi  and  Ignacio Guerra  for interesting conversations.} 

\maketitle

\baselineskip=20pt

\section{Parabolic-elliptic Keller-Segel model}

\subsection*{Statement of the problem.}
We consider in this paper the following Cauchy problem in space dimensions  $d\ge 3$
\begin{align}
u_t-\Delta u+\nabla\cdot(u\nabla v)&= 0,\ \ &x\in {\mathbb R}^d,\ t>0,\label{equ}\\ 
\Delta v+u &=  0,\ \  & x\in {\mathbb R}^d,\ t>0,\label{eqv}\\
u(x,0)&= u_0(x),\ \ &x\in {\mathbb R}^d.\label{ini}
\end{align}
One motivation to study this model comes from Mathematical Biology, where equations \rf{equ}--\rf{eqv} are a simplified Keller-Segel  
system modeling chemotaxis, see e.g. \cite{B-AMSA,BCKSV,Lem}.   
The unknown variables $u=u(x,t)$ and $v=v(x,t)$ denote the density of the population of microorganisms (e.g. swimming bacteria or slime mold),  and the density of the chemical secreted by themselves that attracts them and makes them to aggregate, respectively. 

Another important interpretation of system \rf{equ}--\rf{eqv} comes from Astrophysics, where the unknown function $u=u(x,t)$ is the density of gravitationally interacting massive particles in a cloud (of stars, nebulae, etc.), and $v=v(x,t)$ is the Newtonian potential (``mean field'') of the mass distribution $u$, see \cite{Cha,CSR,B-SM,B-AM,B-CM,BHN}. 

In this work we supplement system \rf{equ}--\rf{eqv} with a nonnegative radial initial datum $u_0\ge 0$. 
Thus, by the uniqueness the corresponding solution $u$ of problem \rf{equ}--\rf{eqv} is also nonnegative and radial as long as it exists. 
It is well known that if the total mass $M$ is finite, it is conserved during the evolution 
\be 
M=\int_{\R^d}u_0(x)\dx=\int_{\R^d} u(x,t)\dx\ \ \ {\rm for\ all\ \ \ }t\in[0,T_{\rm max}).\label{M}
\ee  
 Further, we will also consider solutions with infinite mass  like the famous Chandrasekhar steady state singular solution in \cite{Cha}
\be
u_C(x)=\frac{2(d-2)}{|x|^2}.\label{Ch}
\ee 
This one-point singular solution plays a pivotal role in this work, because it allows to distinguish (in some sense) radial initial conditions of global-in-time solutions from initial data, where solutions cannot exist for all $t>0$. 
Moreover, some of the results in this paper might be  interpreted as  diffusion-dominated dynamics strictly below $u_C$, see next section.  

\subsection*{The $8\pi$-problem in the two-dimensional case} 
Let us now describe previous results which motivated us to start this study and  we  limit ourselves to those publications, which are directly related.  
We begin with the  classical case of $d=2$ where mass $M=8\pi$ plays a crucial role.    
Namely, if $u_0$ is a nonnegative measure of mass $M<8\pi$, then there exists a unique solution which is global-in-time, see {e.g.}  \cite{BM,BDP,BZ}.  
These results have been known previously for radially symmetric initial data, see  \cite{BKLN1,BKLN2,B-BCP,BKZ} for recent presentations. 
On the other hand, if $M>8\pi$, then this solution cannot be continued to a~global-in-time regular one, and a finite time blowup occurs, {cf.} \cite{B-CM,N1,K-O}, and \cite{BHN,B-BCP} for radially symmetric case. The radial blowup is accompanied by the concentration of mass  equal to $8\pi$ at the origin, \cite{BDP,B-BCP}. 
We have generalized this result in our recent work \cite{BKZ} by showing concentration of $8\pi$ for large mass solutions with sufficiently many symmetries. 

\subsection*{Parabolic-elliptic model in higher dimensions}
Now, we discuss the case   $d\geq 3$ in the Keller-Segel model. 
It is well-known that problem \rf{equ}--\rf{ini}  has a unique local-in-time mild solution $u\in {\mathcal C}([0,T); L^p(\R^d))$ 
for every $u_0\in L^p(\R^d)$ with $p>d/2$, see \cite{B-SM,K-JMAA,KS-IMUJ}. For solvability results in other functional spaces like weak Lebesgue (Marcinkiewicz), Morrey and Besov spaces, see also \cite{BB-SM,CPZ,I,K-JMAA,Lem}. In particular,   previous works have dealt with the existence of global-in-time solutions with small data in critical  spaces like $L^{d/2}(\R^d)$, $L^{d/2}_{\rm w}(\R^d)$, $M^{d/2}(\R^d)$, { i.e.}~those which are scale-invariant under the natural scaling,  see {e.g.} \cite{B-SM,BB-SM,K-JMAA,Lem}
\begin{equation}\label{scal:0}
u_\lambda(x,t)=\lambda^2 u(\lambda x,\lambda^2 t)\ \ {\rm for\ each\ \ }\lambda>0. 
\end{equation}
 Blowing up solutions to the parabolic-elliptic model of chemotaxis in dimension $d\geq 3$  have been studied in e.g. \cite{B-CM,BHN,B-AMSA,GMS,MS1,MS2,BKZ-NHM}. 
 
 \subsection*{The Keller-Segel model with an anomalous diffusion} 
 Recently, the Keller-Segel model has been considered in a more general form 
 \begin{align}
u_t+(-\Delta)^{\alpha/2} u+\nabla\cdot(u\nabla v)&= 0,\ \ &x\in {\mathbb R}^d,\ t>0,\label{equ-a}\\ 
\Delta v+u &=  0,\ \  & x\in {\mathbb R}^d,\ t>0,\label{eqv-a} 
\end{align}
with motivations stemming still from biology. 
 Here, the diffusion process  is given by the  fractional power of the Laplacian $(-\Delta)^{\alpha/2}$ with  $\alpha\in(0,2)$, a nonlocal L\'evy type operator,  as was in \cite{BCKZ,BKZ2} and \cite[Sec. 10]{Lem}.  
Results in \cite{BKZ2}, with their proofs  based on new comparison principles for nonlocal operators,  are in a sense parallel to these on global as well as blowing up solutions in the present work. 
In particular, the singular solution $u_C(x)={s(\alpha,d)}{|x|^{-\alpha}}$ in \cite[(2.1)]{BKZ2},  generalizing the Chandrasekhar solution \rf{Ch}, determines a kind of threshold between global-in-time and blowing up solutions.

\subsection*{Brief summary of results in this work.}

In view of the above mentioned results for $d=2$,  a natural question can be posed in dimension  $d\ge 3$: 
{\it Does there exist a critical quantity $\ell=\ell(u_0)$ which decides whether a solution with $u_0$ as the initial datum is global-in-time or this blows up in a finite time?} 
More precisely:  
Do there exist constants $0<c(d)\le C(d)<\infty$ such that the assumption $\ell(u_0)<c(d)$ implies global-in-time existence of solution to problem \rf{equ}--\rf{eqv} while the condition $\ell(u_0)>C(d)$ leads to a finite time blowup of solution? 

We will give a partial  answer to that question for radially symmetric solutions 
 showing that  the quantity $\ell=\ell(u_0)$ corresponds  to the  radial concentration defined in  \rf{rconc} below --- and thus it is equivalent to the Morrey norm in the space  $M^{d/2}(\R^d)$.
Similar results   in the case $\alpha\in(1,2)$  are in \cite{BKZ2,BZ-2} and in  a forthcoming book  \cite{B-book}.


Our main results include:  
global-in-time existence of radially symmetric solutions with  initial data in the critical  Morrey space $M^{d/2}(\R^d)$ whose initial conditions are uniformly below the singular solution $u_C$ in \rf{Ch} in an averaged sense in  Theorem \ref{glo}, together with 
convergence of solutions to $0$ as $t\to\infty$ in various norms. 
The proof of the first result involves a pointwise argument, a~powerful tool used in different contexts such as  free boundary problems and fluid dynamics, cf. also \cite{B-bl} for the case of a nonlinear heat equation. 
A sufficient condition for the global-in-time existence is, in fact, an estimate of the Morrey space  $M^{d/2}(\R^d)$ norm of the initial condition.

As a preliminary result  on general (including sign-changing and not necessarily radial) solutions we will show the 
local-in-time existence of solutions with initial data in Morrey spaces $M^{d/2}(\R^d)\cap M^p(\R^d)$, $p\in\left(\frac{d}{2},d\right)$ in Proposition \ref{lok-istn}. 
The proof  of this auxiliary result is  based on the classical Fujita--Kato iterations procedure in suitably chosen spaces contained in the critical space $M^{d/2}(\R^d)$ which admit local singularities only in $M^p(\R^d)$ thus weaker than the critical one of $u_C$.

We recall in Appendix \ref{blo} some blowup criteria,  in  \ref{ss} and  \ref{sss} ---  some properties of nonnegative stationary radial solutions and nonnegative selfsimilar radial solutions. 


\subsection*{Notation.}  
The $L^p(\R^d)$ norm is denoted by $\|\, .\, \|_p$, $1\le p\le\infty$. 
We use systematically the 
 homogeneous Morrey spaces $M^p(\R^d)$ 
 of measurable functions  on  $\R^d$ which are defined by their norms 
\be
|\!\!| u|\!\!|_{M^p}\equiv  \sup_{R>0,\, x\in\R^d}R^{d(1/p-1)} 
\int_{\{|y-x|<R\}}|u(y)|\dy<\infty\label{hMor}
\ee  
with $M^1(\R^d)=L^1(\R^d)$ and $M^\infty(\R^d)=L^\infty(\R^d)$. 
In the following, we shall also use 
the {\it radial concentration} of a function $u=u(x)$ given by 
\be
\xn u\xn
\equiv  \sup_{R>0}R^{2-d}\int_{\{|y|<R\}}u(y)\dy \label{rconc}
\ee
which, in the case of radial functions, appears to be equivalent to the Morrey norm $M^{d/2}(\R^d)$, see \cite[Lemma 7.1]{BKZ2} as well as \cite[Lemma 3.1]{ADB}. 

 \section{Statement of the results} \label{ls} 
Equation \rf{eqv} is not uniquely solved with respect to $v$.
Thus, in this work we always assume that 
\be
\nabla v=\nabla E_d\ast u  \label{fundsol}
\ee
with  
$$
 E_2(x)=\tfrac{1}{2\pi}\log\tfrac{1}{|x|}\qquad  {\rm and}\qquad  E_d(x)=\tfrac{1}{(d-2)\s_d}\tfrac{1}{|x|^{d-2}}\ \ {\rm for}\ \ d\ge 3. 
$$
Here, the number 
 \be
\s_d=\frac{2\pi^{\frac{d}{2}}}{\Gamma\left(\frac{d}{2}\right)} \label{pole}
\ee 
is the measure of the unit sphere ${\mathbb S}^{d-1}\subset \R^d$. 
Consequently, we consider in fact the Cauchy problem for the nonlocal transport equation 
\begin{align}\label{KS}
u_t-\Delta u+\nabla\cdot(u\nabla E_d\ast u)&= 0,  &&x\in\R^d,\ ,\\
u(x,0)&=u_0(x), &&x\in\R^d.\nonumber
\end{align}
Let us first formulate the main result of this section on the global-in-time solutions to the nonlocal problem \rf{KS}.

\begin{theorem}[Global-in-time solutions]\label{glo}
Let $d\geq 3$. 
Assume that a radially symmetric nonnegative initial condition $u_0\in M^{d/2}(\R^d)$ satisfies  
\be\label{a1}
\xn u_0\xn\equiv\sup_{R>0}R^{2-d}\int_{\{|x|<R\}} u_0(x)\dx<2\s_d.
\ee 
There exists $p\in \left(\frac{d}{2},d\right)$ such that if moreover $u_0\in M^p(\R^d)$ then the corresponding solution $u=u(x,t)$ of problem \rf{KS} exists in the space
\begin{equation*}
  {\mathcal C}_w \Big( [0,T), M^{d/2}(\R^d) \cap M^p(\R^d) \Big)\cap \Big\{ u: (0,T) \to L^\infty (\R^d): t^{\frac{d}{2p}}\| u(t)\|_\infty <\infty \Big\}
\end{equation*}
for each $T>0$,  is nonnegative, global-in-time, radial, and satisfies the bound 
\begin{equation}\label{global_sol}
\xn u(t)\xn=\sup_{R>0}R^{2-d}\int_{\{|x|<R\}} u(x,t)\dx< 2\s_d\ \ {\rm for\ all}\ \ t>0.
\end{equation}
\end{theorem}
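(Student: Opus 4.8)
The plan is to combine the local-in-time existence statement of Proposition \ref{lok-istn} with an \emph{a priori} bound on the radial concentration $\xn u(t)\xn$, the latter being obtained by a pointwise comparison argument applied to the cumulative mass function. The natural device for radial solutions is to pass to the quantity
\begin{equation*}
  Q(r,t)=\int_{\{|x|<r\}}u(x,t)\dx = \s_d\int_0^r u(\rho,t)\,\rho^{d-1}\dr,
\end{equation*}
which is nondecreasing in $r$. A direct computation starting from \rf{KS}, using $\nabla E_d\ast u(x,t)=-\frac{1}{\s_d}\,\frac{x}{|x|^d}\,Q(|x|,t)$ for radial $u$, shows that $Q$ solves a scalar parabolic equation of the form
\begin{equation*}
  Q_t = r^{2-d}\,\p_r\!\left(r^{d-1}\,\p_r\!\big(r^{1-d}Q\big)\right)+\frac{1}{\s_d\,r^{d-1}}\,Q\,\p_r Q,
\end{equation*}
posed for $r>0$, $t>0$, with $Q(0,t)=0$ and initial datum $Q_0(r)=\int_{\{|x|<r\}}u_0\dx$. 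The hypothesis \rf{a1} reads exactly $Q_0(r)<2\s_d\,r^{d-2}$ for all $r>0$, and $\xn u(t)\xn<2\s_d$ is $Q(r,t)<2\s_d\,r^{d-2}$ for all $r$; the Chandrasekhar solution \rf{Ch} corresponds to the stationary profile $Q_C(r)=2\s_d r^{d-2}$, which is a supersolution of the $Q$-equation.

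The heart of the proof is then a maximum-principle / pointwise argument: I would show that if $Q(r,0)$ lies strictly below the barrier $2\s_d r^{d-2}$, it stays strictly below it for all time the solution exists. Concretely, fix $T$ in the existence interval, consider $w(r,t)=Q(r,t)-2\s_d r^{d-2}$, and argue that $w<0$ is preserved: a first touching time $t_0$ and first touching radius $r_0\in(0,\infty)$ would have to exist by continuity (the solution being smooth and decaying for $t>0$ thanks to the $L^\infty$ smoothing in the existence space, and $w\to 0^-$ as $r\to 0$ and as $r\to\infty$ by the Morrey control), and at $(r_0,t_0)$ one has $w=0$, $\p_r w=0$, $\p_r^2 w\le 0$, $\p_t w\ge 0$; feeding this into the evolution equation — after noting that the transport term $\frac{1}{\s_d r^{d-1}}Q\,\p_r Q$ is handled because at the touching point $Q=2\s_d r_0^{d-2}$ is precisely the critical coefficient making the drift match the diffusion of the barrier — yields $\p_t w<0$ at $(r_0,t_0)$, a contradiction. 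This is the place where the constant $2\s_d$ (equivalently $2(d-2)$ in \rf{Ch}) enters in an essential, non-negotiable way, and it is also the step requiring the most care, since $r=0$ is a singular point of the operator and one must justify that the infimum over $r$ is attained in the open half-line; a standard remedy is to compare instead with the slightly lifted barrier $(2\s_d-\eta)r^{d-2}+\delta$ and let $\delta,\eta\to 0$, or to work on $[\rho,1/\rho]$ and exhaust.

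Once the a priori bound \rf{global_sol} is established on the maximal existence interval $[0,T_{\max})$, global existence follows by a continuation argument: the bound $\xn u(t)\xn<2\s_d$ together with the $M^p$-regularity, $p\in(\frac d2,d)$, controls the norms in which Proposition \ref{lok-istn} guarantees local existence with a lifespan bounded below in terms of those norms, so the solution cannot blow up in finite time and extends to all $t>0$; the blowup criteria recalled in Appendix \ref{blo} can be invoked here to make the obstruction-to-continuation explicit. The main obstacle, as indicated, is the rigorous pointwise/comparison step at the singular radius $r=0$ and the justification that $Q(\cdot,t)$ is regular enough (for $t>0$) for the touching-point calculus to apply; the derivation of the $Q$-equation and the continuation argument are routine by comparison. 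One subtlety worth flagging is that the solution produced by Proposition \ref{lok-istn} is only weakly continuous into the Morrey spaces, so the pointwise argument should be run for $t>0$, where the $t^{d/2p}\|u(t)\|_\infty<\infty$ bound upgrades regularity, and then \rf{a1} is used to pass to the limit $t\to 0^+$ in \rf{global_sol}.
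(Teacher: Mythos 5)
Your overall architecture --- local existence from Proposition \ref{lok-istn}, a touching-point (maximum-principle) argument for the cumulative mass $M(r,t)$ against the Chandrasekhar barrier, then continuation --- is the paper's strategy. But there is a genuine gap at exactly the point you flag as delicate, and the remedy you propose does not close it. With the single lifted barrier $b(r)=(2\s_d-\eta)r^{d-2}+\delta$, the touching-point computation at a first contact $(R_0,t_0)$ gives
\begin{equation*}
\p_t M \;\le\; b_{rr}-\tfrac{d-1}{r}b_r+\tfrac{1}{\s_d}r^{1-d}b\,b_r
\;=\;\tfrac{(2\s_d-\eta)(d-2)}{\s_d R_0^{2}}\bigl(-\eta R_0^{d-2}+\delta\bigr),
\end{equation*}
which is negative only for $R_0^{d-2}>\delta/\eta$: the additive lift $\delta$ enters the quadratic drift term and produces a positive contribution of order $\delta r^{-2}$ that dominates the good term of order $\eta r^{d-4}$ as $r\to0$ for every $d\ge3$. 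So contact at small radii is not excluded; and sending $\delta\to0$ does not help, because without the lift the barrier degenerates at $r=0$ (your $w\to0^-$) and a first touching radius need not exist. The paper resolves this with a \emph{second, steeper} barrier: since $u_0\in M^p(\R^d)$ with $p>\frac d2$, one has $M_0(r)\le Kr^{d-d/p}$ with $d-d/p>d-2$, and Proposition \ref{maximum} propagates the double barrier $\min\{Kr^{d-d/p},\,2\eps\s_d r^{d-2}\}$: near $r=0$ the contact must occur against the $Kr^{d-d/p}$ piece, and the analogous computation there is strictly negative precisely when $2\eps<d/p$. This is why the theorem says ``there exists $p$'' (one must take $p$ close to $d/2$), and why the $M^p$ hypothesis is needed \emph{inside} the barrier argument, not only for local existence; your proposal never uses it there.

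A second, related gap is in the continuation step. To reapply Proposition \ref{lok-istn} you need a priori control not only of $\xn u(t)\xn$ (equivalent to the $M^{d/2}$ norm for radial nonnegative functions by \eqref{v_Morey}) but also of $|\!\!|u(t)|\!\!|_{M^p}$ and of $\sup_t t^{\frac{d}{2p}}\|u(t)\|_\infty$; you assert these are ``controlled'' but do not derive them. In the paper the $M^p$ bound is exactly the other half of the double barrier ($M(r,t)<Kr^{d-d/p}$ yields the $M^p$ bound via \eqref{v_Morey}), and the weighted $L^\infty$ bound then follows from the Duhamel formula together with a singular Gronwall lemma. Finally, a minor slip: your displayed diffusion operator $r^{2-d}\p_r\bigl(r^{d-1}\p_r(r^{1-d}Q)\bigr)$ does not reduce to the correct operator $Q_{rr}-\frac{d-1}{r}Q_r$ appearing in \eqref{mass}.
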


Theorem \ref{glo} is proved in the following way. 
First, in Proposition \ref{lok-istn} we construct local-in-time solutions for arbitrary initial conditions (even  sign changing and not necessarily radial)  from $M^{d/2}(\R^d)\cap M^p(\R^d)$. 
Such solutions for radial nonnegative data appear to be also radial, nonnegative and sufficiently regular. 
Then, we derive a differential equation for the radial distribution function $M(r,t)=\int_{\{|x|<r\}}u(x,t)\dx$ and study its properties in Proposition \ref{maximum}. 
Theorem \ref{glo}  is proved at the end of Section \ref{sec:apriori}.

\begin{remark}
Notice that, by a direct calculation, we have 
$$
2\s_d=R^{2-d}\int_{\{|x|<R\}}u_C(x)\dx\ \ \ {\rm for\ each}\ \ \ R>0.
$$
Hence, the assumption $\xn u_0\xn<2\s_d$ in Theorem \ref{glo} means that the initial datum $u_0$ is \underline{strictly} below  $u_C$ in the sense of the radial concentrations, namely, we assume that for some $\varepsilon \in (0,1)$ we have 
$$
R^{2-d}\int_{\{|x|<R\}}u_0(x)\dx<\varepsilon R^{2-d}\int_{\{|x|<R\}}u_C(x)\dx \ \ \ {\rm for\ each}\ \ \ R>0.
$$
By Theorem \ref{glo}, the corresponding solution exists for all $t\ge 0$ and stays below the singular steady state $u_C$ in this sense. 
\end{remark}

\begin{remark}
In fact, we have identified 
that the radial concentration \eqref{rconc}
 decides whether a nonnegative, radially symmetric solution to problem \rf{equ}--\rf{ini} exists globally in time or it blows up in a finite time. More precisely, we have dichotomy
\begin{itemize}
   \item if $\xn u_0 \xn <2\s_d$, then a solution is global-in-time by Theorem \ref{glo}, on the other hand
   \item there exists a constant $C_d>0$ such that if $\xn u_0\xn >C_d$ then a solution cannot be global-in-time, see Appendix \ref{blo} below for corresponding blowup results.
\end{itemize}
\end{remark}

Obviously, the constant in the blowup criterion satisfies $C_d>2\s_d$. Below, we estimate the discrepancy between $C_d$ and $2\s_d$.

\begin{remark}\label{ess}
For each $T>0$, system \eqref{equ}--\eqref{eqv} has an explicit  
{\em smooth}   
blowing up solution which satisfies
\be
\int_{|x|\le r}u(x,t)\dx=\frac{4\s_d r^d}{r^2+2(d-2)(T-t)}
\qquad \text{for all}\quad t\in [0,T),
\label{bl-sol}
\ee
 see \cite[(33)]{BCKSV} with a discussion in \cite{BZ-2}.
Here, the corresponding initial density  is
$$
u_0(x)=4(d-2)\frac{|x|^2+2T}{(|x|^2+2(d-2)T)^2}$$
and it  satisfies 
$$ \xn u_0\xn=4\s_d=\lim_{r\to\infty}r^{2-d}\int_{|x|\le r}u(x,t)\dx=\xn u(t)\xn
\qquad \text{for all}\quad t\in [0,T).
$$
Notice that  $u_0 \in M^{d/2}(\R^d) \cap L^\infty (\R^d) \subset M^{d/2}(\R^d) \cap M^p(\R^d)$ for $p\in \left(\frac{d}{2}, d\right)$. Since there exists a blowing up solution with the radial concentration equal to $4\sigma_d$ then, necessarily, we should have $C_d\geq 4\sigma_d$ in the blowup criterion in Theorem~\ref{blow}.
Papers \cite{BKZ,BKZ-NHM,BZ-2} contain  estimates of this  constant $C_d$ from above. Actually, the best bound $C_d<4\s_d\sqrt{\pi d} $ has been obtained in \cite{BZ-2}. 
\end{remark}

In the case of an integrable initial datum, the global-in-time solution constructed in Theorem  \ref{glo} decays in the following sense.

\begin{corollary}[Decay of radial concentration]\label{asy-z}
Under  the assumptions of Theorem  \ref{glo}, if additionally the initial condition   satisfies $u_0\in L^1(\R^d)$,  then 
\be
 \xn u(t)\xn=\sup_{R>0}R^{2-d}\int_{\{|x|<R\}}u(x,t)\dx\to 0\ \ \ {\rm as}\ \ \ t\to\infty. 
 \label{as-z}
\ee 
\end{corollary}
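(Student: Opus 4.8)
\smallskip
\noindent\emph{Proof proposal.} I would argue by contradiction, exploiting the fact that after rescaling, the radial distribution function obeys a drift--diffusion equation whose zeroth--order term is strictly dissipative on the range $\{0<N<2\s_d\}$ --- precisely the range to which Theorem \ref{glo} confines the solution --- whereas finiteness of the conserved mass $M=\|u_0\|_1$ (the only place the extra hypothesis $u_0\in L^1(\R^d)$ is used) forces the radius at which the radial concentration is realised to stay in a bounded set, uniformly in time. So suppose, contrary to \eqref{as-z}, that $\xn u(t)\xn$ does not tend to $0$ as $t\to\infty$.

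Set $M(r,t)=\int_{\{|x|<r\}}u(x,t)\dx$ and $N(r,t)=r^{2-d}M(r,t)$, so that $\xn u(t)\xn=\sup_{r>0}N(r,t)$. Starting from the equation for $M(r,t)$ that underlies Proposition \ref{maximum}, namely $\p_tM=\p_r^2M-\tfrac{d-1}{r}\p_rM+\tfrac1{\s_d r^{d-1}}\,M\p_rM$, the substitution $M=r^{d-2}N$ gives, after a routine computation,
\[
\p_tN=\p_r^2N+\frac{d-3}{r}\,\p_rN+\frac{(d-2)\,N}{\s_d\,r^{2}}\bigl(N-2\s_d\bigr)+\frac{N\,\p_rN}{\s_d\,r}.
\]
For $t>0$ the solution is bounded and smooth in $x$ (parabolic smoothing, available here exactly because $u$ stays strictly below the Chandrasekhar profile $u_C$), hence $r\mapsto N(r,t)$ is continuous and nonnegative on $(0,\infty)$, with $N(r,t)\le C\,r^{2-d/p}\to0$ as $r\to0^+$ (using $u(t)\in M^p(\R^d)$, $p>\tfrac d2$) and $N(r,t)\le r^{2-d}M\to0$ as $r\to\infty$ (using $d\ge3$). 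Thus $\xn u(t)\xn=N(r^*(t),t)$ for some $r^*(t)\in(0,\infty)$, an interior maximum of $N(\cdot,t)$, where $\p_rN=0$ and $\p_r^2N\le0$. Evaluating the $N$--equation at $r^*(t)$, and combining with the envelope bound $\xn u(s)\xn\ge N(r^*(t),s)$ for $s<t$ (divide $\xn u(t)\xn-\xn u(s)\xn\le N(r^*(t),t)-N(r^*(t),s)$ by $t-s$ and let $s\uparrow t$), I obtain for a.e.\ $t>0$
\[
\frac{d}{dt}\xn u(t)\xn\;\le\;\frac{(d-2)\,\xn u(t)\xn\bigl(\xn u(t)\xn-2\s_d\bigr)}{\s_d\,r^*(t)^{2}}\;\le\;0,
\]
the last inequality by \eqref{global_sol}. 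Hence $t\mapsto\xn u(t)\xn$ is nonincreasing; let $\ell:=\lim_{t\to\infty}\xn u(t)\xn\ge0$, and note $\xn u(t)\xn\le\xn u_0\xn<2\s_d$ for every $t$.

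It remains to exclude $\ell>0$. If $\ell>0$, then from $\ell\le\xn u(t)\xn=N(r^*(t),t)\le r^*(t)^{2-d}M$ one gets $r^*(t)\le(M/\ell)^{1/(d-2)}=:R_0$ for all $t$; plugging $\xn u(t)\xn\ge\ell$, $2\s_d-\xn u(t)\xn\ge2\s_d-\xn u_0\xn>0$ and $r^*(t)\le R_0$ into the displayed differential inequality yields $\frac{d}{dt}\xn u(t)\xn\le-\delta$ for a.e.\ $t$, where $\delta:=\tfrac{(d-2)\,\ell\,(2\s_d-\xn u_0\xn)}{\s_d\,R_0^{2}}>0$. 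Since $\xn u(t)\xn$ is nonincreasing, this gives $\xn u(t)\xn\le\xn u_0\xn+\int_0^t\tfrac{d}{ds}\xn u(s)\xn\,\ds\le\xn u_0\xn-\delta t$, which is negative for large $t$ --- a contradiction. Therefore $\ell=0$, which is \eqref{as-z}.

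The delicate point is the differential inequality above: one has to know that $u(\cdot,t)$, and therefore $N(\cdot,t)$, is of class $C^{2}$ in space for each $t>0$, so that the $N$--equation holds in the classical sense and the first-- and second--order conditions at $r^*(t)$ may be used --- this is exactly where the smoothing estimates for the subcritical solution furnished by Theorem \ref{glo} are needed --- and one should fix a measurable selection $t\mapsto r^*(t)$ of maximisers (every point realising the supremum lies in $(0,\infty)$ and is admissible). Apart from monotonicity, no further regularity of $t\mapsto\xn u(t)\xn$ is required, because the inequality only involves one--sided difference quotients and, for a monotone function, $\xn u(t)\xn-\xn u_0\xn\le\int_0^t\tfrac{d}{ds}\xn u(s)\xn\,\ds$.
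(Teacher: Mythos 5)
Your argument is correct in substance, but it takes a genuinely different route from the paper. The paper absorbs the drift term using the a priori bound $M(r,t)<2\s_d r^{d-2}$ and $M_r\ge 0$ to get $M_t\le M_{rr}$, then compares $M$ with the explicit Dirichlet heat kernel on the half-line and splits $r$ into three regions (small $r$ controlled by the $M^p$-bound $M\le Kr^{d-d/p}$, large $r$ by the total mass, intermediate $r$ by the $t^{-1/2}$ decay of the one-dimensional heat solution). You instead run a nonlinear maximum-principle argument directly on $N=r^{2-d}M$: your equation for $N$ is computed correctly, the sup is attained at an interior point because $N\le Kr^{2-d/p}\to0$ as $r\to0$ and $N\le \|u_0\|_1 r^{2-d}\to0$ as $r\to\infty$ (both uniformly in $t$, by Proposition \ref{maximum} and mass conservation), and the Hamilton-type one-sided difference quotient gives $D^-\xn u(t)\xn\le \frac{(d-2)}{\s_d r^*(t)^2}\xn u(t)\xn\bigl(\xn u(t)\xn-2\s_d\bigr)\le0$, after which the mass bound confines $r^*(t)$ and forces the limit to be zero. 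What your approach buys is the additional qualitative information that $t\mapsto\xn u(t)\xn$ is \emph{nonincreasing}, which the paper's comparison argument does not give; what the paper's approach buys is that it never has to differentiate a supremum, so it avoids the regularity bookkeeping, and by optimizing $R_1,R_2$ in $t$ it can be upgraded to an explicit decay rate. The one point you should tighten: passing from "$D^-\xn u(t)\xn\le0$ for every $t$" to genuine monotonicity requires continuity of $t\mapsto\xn u(t)\xn$, which you do not state; it follows from the uniform-in-$t$ smallness of $N$ outside a compact $r$-interval together with local uniform continuity of $N$ there, and similarly the final step $\xn u(t)\xn\le\xn u_0\xn+\int_0^t\frac{d}{ds}\xn u(s)\xn\,\ds$ is legitimate precisely because a continuous nonincreasing function dominates the integral of its a.e.\ derivative. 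With those two sentences added, the proof is complete.
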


\begin{corollary}[$L^q$-decay estimate] \label{cor:Lp:decay}
Under the assumptions of Theorem  \ref{glo}, if  the initial condition also satisfies $u_0\in L^1(\R^d)$,  then
 $$
 \|u(t)\|_q\leq C(d,q,\mu) t^{-\frac{d}{2}\left(1-\frac{1}{q}\right)}\|u_0\|_1 \quad \text{for all}
 \quad  t>0.
 $$
\end{corollary}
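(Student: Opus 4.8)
The plan is to combine the smoothing estimates that underlie the local/global theory (the $L^\infty$-part of the solution space, where $t^{d/(2p)}\|u(t)\|_\infty<\infty$) with the conservation and decay of the $L^1$-norm, interpolating between the two to reach any $q\in[1,\infty]$. First I would record that under the hypotheses of Theorem \ref{glo} the solution is nonnegative, radial, global, and obeys the uniform control $\xn u(t)\xn<2\sigma_d$ for all $t>0$; moreover $u_0\in L^1(\R^d)$ so the mass \eqref{M} is finite and conserved, giving $\|u(t)\|_1=\|u_0\|_1$ for all $t\ge 0$ (for nonnegative solutions $\|u(t)\|_1=\int u(x,t)\dx = M$). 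The strategy is then to prove the endpoint estimate $\|u(t)\|_\infty\le C t^{-d/2}\|u_0\|_1$ and interpolate: since $\|u(t)\|_q\le \|u(t)\|_1^{1/q}\|u(t)\|_\infty^{1-1/q}$, the stated bound $t^{-\frac d2(1-\frac1q)}\|u_0\|_1$ follows immediately from $\|u(t)\|_1=\|u_0\|_1$ and the $L^1\to L^\infty$ bound.

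The core work is thus the $L^1$-to-$L^\infty$ smoothing bound $\|u(t)\|_\infty\le Ct^{-d/2}\|u_0\|_1$. Here I would exploit the scaling \eqref{scal:0}: if $u$ solves \eqref{KS} with datum $u_0$, then $u_\lambda(x,t)=\lambda^2 u(\lambda x,\lambda^2 t)$ solves it with datum $\lambda^2 u_0(\lambda x)$, and $\|u_\lambda(\cdot,t)\|_1 = \lambda^{2-d}\|u_0\|_1$ while $\xn u_{0,\lambda}\xn = \xn u_0\xn$, so the smallness hypothesis \eqref{a1} is scale-invariant. Evaluating the solution space membership at $t=1$ after rescaling gives $\|u(t)\|_\infty \le C t^{-1}\,\|u(t/2)\|_{?}$-type bounds; more directly, one uses the Duhamel formula $u(t)=e^{t\Delta}u_0 - \int_0^t \nabla e^{(t-s)\Delta}\cdot\bigl(u(s)\nabla E_d\ast u(s)\bigr)\ds$, the heat-semigroup bound $\|e^{t\Delta}u_0\|_\infty\le C t^{-d/2}\|u_0\|_1$, and the bilinear estimate controlling the nonlinear term by the Morrey norms already bounded in Theorem \ref{glo}, propagating the $t^{-d/2}$ rate via a Gronwall/continuation argument on $\sup_{0<s<t} s^{d/2}\|u(s)\|_\infty$. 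The uniform-in-time control $\xn u(t)\xn<2\sigma_d$ (equivalently, a uniform $M^{d/2}$ bound for radial functions) is exactly what makes the bilinear term subcritical enough to close this estimate with a constant $C(d,q,\mu)$ depending only on $d$, $q$, and $\mu:=\xn u_0\xn$ (or its distance from $2\sigma_d$).

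The main obstacle I anticipate is making the $L^1$-to-$L^\infty$ decay rigorous while keeping the constant dependent only on $d,q,\mu$ and not, say, on $\|u_0\|_{M^p}$: the local theory of Proposition \ref{lok-istn} produces solutions with $t^{d/(2p)}\|u(t)\|_\infty<\infty$ but a priori the implied constant could blow up in time. One overcomes this by a bootstrap that first upgrades to an $M^{d/2}\to L^\infty$ estimate with rate $t^{-1}$ uniform on $(0,\infty)$ using the time-independent bound \eqref{global_sol}, then feeds in the finite mass to improve $t^{-1}$ to $t^{-d/2}$ for large $t$ (for small $t$ one splits $u_0$ into an $L^1$-small tail plus a bounded bump, or simply interpolates between the $M^{d/2}$-rate valid for all $t$ and the $L^1$ datum). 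A secondary technical point is justifying that the solution genuinely lies in $L^q$ for each $t>0$ and that mass is conserved rather than merely non-increasing, which follows from the construction (nonnegativity plus the $M^{d/2}\cap L^\infty$ regularity gives enough integrability to legitimize integrating \eqref{KS} in space with no flux at infinity, since $\nabla E_d\ast u$ decays). Once these are in place, the interpolation step is routine.
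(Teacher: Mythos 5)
Your overall strategy --- prove an $L^1\to L^\infty$ hypercontractive bound $\|u(t)\|_\infty\le Ct^{-d/2}\|u_0\|_1$ and interpolate against the conserved mass --- is a legitimate idea and genuinely different from what the paper does, but the step that carries all the weight is left essentially unproved, and the mechanism you sketch for it does not close. Two concrete problems. First, the Gronwall argument on $\sup_{0<s<t}s^{d/2}\|u(s)\|_\infty$: inserting the ansatz $\|u(s)\|_\infty\le A\,s^{-d/2}\|u_0\|_1$ into the Duhamel term together with the interpolation \eqref{Sob} (with $p=d/2$, $r=\infty$), i.e. $\|\nabla E_d\ast u(s)\|_\infty\le C\,|\!\!| u(s)|\!\!|_{M^{d/2}}^{1/2}\|u(s)\|_\infty^{1/2}\le C' s^{-d/4}$, produces an integral of the type $\int_0^t(t-s)^{-1/2}s^{-3d/4}\ds$, which diverges at $s=0$ for every $d\ge 3$ since $3d/4>1$; one must split the integral at $t/2$ and use weaker norms near $s=0$, and you do not say how. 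Second, and more fundamentally, any perturbative Duhamel/Gronwall closure of this type needs the critical norm multiplying the quadratic term to be \emph{small} (this is exactly the smallness regime of the Lemari\'e-Rieusset result quoted in the paper). The only uniform control available here is $\xn u(t)\xn<2\s_d$, and $2\s_d$ is a threshold value, not a small one: there is no reason the resulting constant $C\cdot 2\s_d$ in front of the quadratic term is less than $1$, so the bootstrap need not close. Your very first step, ``upgrade to an $M^{d/2}\to L^\infty$ estimate with rate $t^{-1}$ uniform on $(0,\infty)$,'' already runs into this wall.

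The paper avoids the perturbative framework entirely and runs an $L^q$ energy estimate that exploits the sign and radial structure quantitatively: multiplying \eqref{equ} by $u^{q-1}$ and integrating by parts, the drift term is controlled through the radial identity $x\cdot\nabla v=-\s_d^{-1}|x|^{2-d}M(|x|,t)$ (Lemma \ref{potential}) and Hardy's inequality \eqref{Ha}, which gives $\frac{\rm d}{{\rm d}t}\|u\|_q^q\le\left(-4\tfrac{q-1}{q}+C_q\sup_{r>0} r^{2-d}M(r,t)\right)\|\nabla u^{q/2}\|_2^2$. The coefficient becomes negative for large $t$ because the radial concentration tends to zero --- this is Corollary \ref{asy-z}, a result you never invoke but which is the essential input for general $q$; then the Gagliardo--Nirenberg inequality together with $\|u(t)\|_1=\|u_0\|_1$ turns this into $f'\le -C\|u_0\|_1^{-qs/2}f^{1+s/2}$ for $f=\|u\|_q^q$, which integrates to the stated algebraic rate. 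If you want to salvage your route, you would need first to establish Corollary \ref{asy-z} (or an equivalent large-time smallness of $\xn u(t)\xn$) and only then run a Duhamel bootstrap from a large initial time with the time integral split at $t/2$. As written, the proposal has a genuine gap.
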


\begin{remark}\label{stationary}
For $d\ge 3$ problem \rf{equ}--\rf{eqv} has stationary solutions $U=U(x)$ and $\nabla V = \nabla E_d \ast U$ which satisfy
$$
\lim_{R\to\infty}R^{2-d}\int_{\{|x|\le R\}}U(x)\dx = 2\s_d,
$$
see Theorem \ref{oscillating} below.
They do not belong to $L^1(\R^d)$ and, of course, their radial concentrations are constant in time. 
\end{remark}

\begin{remark}\label{selfsim}
For the initial condition $u_0(x) = \varepsilon |x|^{-2}$ with sufficiently small $\varepsilon >0$, problem \rf{equ}--\rf{ini} (or \rf{KS}) has selfsimilar solutions of the form 
$$u(x,t)=\frac{1}{t}U\left(\frac{|x|}{\sqrt{t}}\right)$$ 
with a profile $U$, see Appendix \ref{sss}. Here, the radial concentration is also (as in Remark~\ref{stationary}) constant in time, because
\begin{equation*}
\begin{split}
\sup_{R>0}R^{2-d}\int_{\{|x|\le R\}}
\frac{1}{t}U\left(\frac{|x|}{\sqrt{t}}\right)\,{\rm d}x
=\sup_{R>0} \left(\frac{R}{\sqrt{t}}\right)^{2-d}
\int_{\left\{|x|\le \frac{R}{\sqrt{t}}\right\}}
U(|x|)\,{\rm d
}x=\xn U\xn .
\end{split}
\end{equation*}
The quantity $\xn U\xn$ is finite because $U \in {\mathcal C}^\infty (\R^d)$ and $|U(\varrho)|\le \frac{C}{|\varrho|^2}$ for large $\varrho$, see Appendix~\ref{sss}. Notice that the initial datum of such a selfsimilar solution satisfies  $u_0 \in M^{d/2}(\R^d)\setminus M^p(\R^d)$ for each $p>\frac{d}{2}$ and $\xn u_0\xn = \varepsilon\frac{\s_d}{d-2}$.
\end{remark}

\section{Local-in-time solutions}
We begin by constructing a local-in-time {\em mild} solution of problem \rf{KS}, namely, a~function $u=u(x,t)$ satisfying the Duhamel formula  
\be
u(t)={\rm e}^{t\Delta}u_0+  {\mathcal B}(u,u)(t).\label{Duh}
\ee 
Here, the symbol ${\rm e}^{t\Delta}$  denotes the heat  semigroup  on $\R^d$, and the bilinear form ${\mathcal B}$ is defined by 
\be
{\mathcal B}(u,w)(t)=\int_0^t \nabla {\rm e}^{(t-s)\Delta}(u\nabla E_d\ast w)(s)\ds.\label{B-form}
\ee 

To deal with the integral equation \rf{Duh}, we recall that,  
similarly to the action of the heat semigroup on $L^p$ spaces, for $1\le p\le q \le \infty$ 
the inequalities 
\begin{align}
|\!\!| {\rm e}^{t\Delta}f|\!\!|_{M^{q}}&\le Ct^{-d(1/p-1/q)/2}|\!\!|f|\!\!|_{M^p} \label{polgrupa},\\
|\!\!| {\nabla \rm e}^{t\Delta}f|\!\!|_{M^{q}}&\le Ct^{-1/2-d(1/p-1/q)/2}|\!\!|f|\!\!|_{M^p} \label{polgrupa1}
\end{align}
hold, where for either $p=1$ or $q=\infty$ the above estimate involve the norms $\|\, .\,\|_1$ and $\|\, .\, \|_\infty$ norms, resp., (see \cite[Prop. 3.2]{G-M}). 
We also recall from \cite[Prop. 3.1]{G-M} a version of  Riesz potential estimates in the Morrey norms  
\begin{equation}\label{Sob0}
|\!\!| \nabla E_d* u|\!\!|_{M^r} \leq C|\!\!| u|\!\!|_{M^p}\quad \text{with} \quad 
\frac{1}{r} =\frac{1}{p}-\frac{1}{d}
\end{equation}
as well as an interpolation estimate 
\be
\| \nabla E_d* u\|_\infty\le C|\!\!| u|\!\!|_{M^{p}}^\mu|\!\!| u|\!\!|_{M^r}^\nu\label{Sob}
\ee
with 
$$1\le p<d<r\le\infty,\ \ {\rm and} \ \ \mu=\frac{\frac1d-\frac1r}{\frac1p-\frac1r},\ \  \nu=\frac{\frac1p-\frac1d}{\frac1p-\frac1r},\ \ {\rm  so\ that}\ \ \mu+\nu=1.$$ 

\begin{proposition} \label{lok-istn}
Given $u_0\in M^{d/2}(\R^d)\cap M^p(\R^d)$ with $d\ge 2$ and $p\in\left(\frac{d}{2},d\right)$, there exist $T=T(u_0)>0$ and a unique local-in-time solution 
\begin{equation}\label{X_T_space}
u\in {\mathcal X}_T = {\mathcal C}_{\rm w}\Big([0,T], M^{d/2}(\R^d)\cap M^p(\R^d)\Big)
\cap 
{\mathcal Y}_T,
\end{equation}
where
$$
\mathcal{Y}_T = \Big\{u:(0,T)\to L^\infty(\R^d): \ \ 
\sup_{0<t\le T}t^{\frac{d}{2p}}\| u(t)\|_{\infty}<\infty\Big\}
$$
of problem \rf{equ}--\rf{ini}. This is a classical solution of this problem, namely, $u, u_t, \nabla u, D^2 u \in {\mathcal C}\left(\R^d \times (0,T)\right)$.
\end{proposition}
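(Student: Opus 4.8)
The plan is to apply the standard Fujita--Kato fixed point scheme to the Duhamel formula \rf{Duh}--\rf{B-form} in the Banach space ${\mathcal X}_T$ endowed with the norm
\[
\|u\|_{{\mathcal X}_T} = \sup_{0<t\le T}\bigl(\mn u(t)\mnp + \mn u(t)\mn_{M^{d/2}}\bigr) + \sup_{0<t\le T} t^{\frac{d}{2p}}\|u(t)\|_\infty.
\]
First I would check that the linear term ${\rm e}^{t\Delta}u_0$ belongs to ${\mathcal X}_T$: the $M^{d/2}$ and $M^p$ bounds are immediate from \rf{polgrupa} with $p=q$, while the $L^\infty$ bound with the weight $t^{d/(2p)}$ follows from \rf{polgrupa} with $q=\infty$, since $d(1/p-1/\infty)/2 = d/(2p)$. (The fact that $u_0$ is only in a Morrey space, not a Lebesgue space, is exactly why the $L^\infty$ norm is allowed to blow up as $t\to 0$ — this is the role of ${\mathcal Y}_T$.) Continuity in the weak-$*$ sense ${\mathcal C}_{\rm w}$ is the usual consequence of strong continuity of $t\mapsto {\rm e}^{t\Delta}$ on the predual together with uniform boundedness.

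The core estimate is the bilinear bound $\|{\mathcal B}(u,w)\|_{{\mathcal X}_T}\le C\,T^\theta\,\|u\|_{{\mathcal X}_T}\|w\|_{{\mathcal X}_T}$ for some $\theta>0$, together with the matching Lipschitz estimate $\|{\mathcal B}(u,u)-{\mathcal B}(w,w)\|_{{\mathcal X}_T}\le C\,T^\theta(\|u\|_{{\mathcal X}_T}+\|w\|_{{\mathcal X}_T})\|u-w\|_{{\mathcal X}_T}$, which follows from the first by bilinearity. To estimate ${\mathcal B}(u,w)(t)$ in each of the three component norms I would insert $\nabla{\rm e}^{(t-s)\Delta}$ acting on the product $u(s)\,\nabla E_d\ast w(s)$ and use, in order: Hölder's inequality for Morrey norms to control $\mn u\,\nabla E_d\ast w\mn_{M^s}$ by $\mn u\mn_{M^{s_1}}\mn \nabla E_d\ast w\mn_{M^{s_2}}$ with $1/s = 1/s_1+1/s_2$; the Riesz potential estimate \rf{Sob0} (or \rf{Sob} when an $L^\infty$ bound on $\nabla E_d\ast w$ is needed) to absorb the convolution; and finally \rf{polgrupa1} to move back to the target Morrey or $L^\infty$ norm, paying a factor $(t-s)^{-1/2-d(\cdots)/2}$. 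One then chooses the intermediate exponents so that the $s$-integral $\int_0^t (t-s)^{-\beta} s^{-\gamma}\ds = B(1-\beta,1-\gamma)\,t^{1-\beta-\gamma}$ converges (i.e. $\beta<1$, $\gamma<1$) and the net power of $t$ is strictly positive, which forces the constraint $p<d$ (so that $d/(2p)<1/2$ and the $L^\infty$-weight exponents stay subcritical) and $p>d/2$ (so that $M^p$ is a genuine local refinement strictly weaker than the critical singularity of $u_C$, making the product estimates close). The hard part here is bookkeeping: there are three target norms and in each the product must be split at a different pair of exponents, and one must verify that every resulting Beta-function integral is finite and every power of $T$ positive simultaneously — a somewhat delicate but entirely mechanical optimization over the admissible exponent ranges.

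Given these two estimates, the Banach fixed point theorem applied to the map $u\mapsto {\rm e}^{t\Delta}u_0 + {\mathcal B}(u,u)$ on a small ball of ${\mathcal X}_T$ yields, for $T=T(u_0)$ small enough, a unique mild solution; uniqueness in the whole space ${\mathcal X}_T$ (not merely in the ball) follows by the standard argument of comparing two solutions on a short initial interval and continuing. It remains to upgrade the mild solution to a classical one. For this I would use a bootstrap: the $L^\infty$-smoothing built into ${\mathcal Y}_T$ gives $u(t)\in L^\infty$ for $t>0$, hence $\nabla E_d\ast u(t)\in L^\infty$ by \rf{Sob}, so the drift term in \rf{KS} is bounded on $\R^d\times[\tau,T]$ for any $\tau>0$; parabolic $L^p$ and Schauder estimates applied to the linear equation $u_t-\Delta u = -\nabla\cdot(u\,\nabla E_d\ast u)$ on $t\ge\tau$ then give $u,u_t,\nabla u, D^2u\in {\mathcal C}(\R^d\times(\tau,T))$, and since $\tau>0$ is arbitrary the regularity holds on all of $\R^d\times(0,T)$. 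The main genuine obstacle in the whole argument is, as noted, the simultaneous exponent optimization in the bilinear estimate; everything else is routine once that is in place.
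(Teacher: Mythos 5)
Your plan coincides with the paper's proof in all of its main elements: the Fujita--Kato iteration for the Duhamel formula \rf{Duh} in the space ${\mathcal X}_T$ with exactly this norm, the semigroup and Riesz--potential estimates \rf{polgrupa}--\rf{Sob} in Morrey spaces to close the bilinear estimate (the paper pairs the Morrey norm of $u$ with the $L^\infty$ norm of $\nabla E_d\ast w$ via \rf{Sob} for the $M^{d/2}$ and $M^p$ components, and the $L^\infty$ norm of $u$ with the $M^r$ norm of $\nabla E_d\ast w$, $\tfrac1r=\tfrac1p-\tfrac1d$, for the weighted $L^\infty$ component), and the same bootstrap --- boundedness of $\nabla E_d\ast u(t)$ for $t>0$ plus parabolic regularity --- to upgrade the mild solution to a classical one.

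There is, however, one concrete point where your plan as written cannot be carried out: you propose to arrange the exponents so that the bilinear estimate carries a factor $T^\theta$ with $\theta>0$ in \emph{every} component of the ${\mathcal X}_T$-norm and to run the contraction off this smallness. For the critical component $\sup_t|\!\!|\cdot|\!\!|_{M^{d/2}}$ this is impossible: both the equation and this norm are invariant under the scaling \rf{scal:0}, so no positive power of $T$ can appear there. This is visible in the paper's computation, where the exponent $\tfrac12-\nu\tfrac{d}{2q}$ equals exactly $0$ for $q=d/2$, and the resulting bound is
\begin{equation*}
|\!\!|{\mathcal B}(u,w)(t)|\!\!|_{M^{d/2}}\le C\,\sup_{0\le s\le T}|\!\!| u(s)|\!\!|_{M^{d/2}}\;\Big(\sup_{0\le s\le T}|\!\!| w(s)|\!\!|_{M^{d/2}}\Big)^{1/2}\;\|w\|_{{\mathcal Y}_T}^{1/2}
\end{equation*}
with a constant independent of $T$. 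The contraction on the critical component therefore needs a different source of smallness than a power of $T$ --- in this setup it is supplied by the fractional power of the auxiliary norm $\|w\|_{{\mathcal Y}_T}$ together with the standard critical-space device (smallness of the subcritical/auxiliary pieces of the norm along the Picard iteration for small $T$), which the paper delegates to the cited references \cite{Lem,BCKZ,B-SM}. You should amend your argument by separating the $M^{d/2}$ component and treating it this way; the remaining components, the uniqueness argument, and the regularity bootstrap are exactly as in the paper.
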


\begin{remark}\label{weak-conv}
Note that, in general, we have only  weak convergence of ${\rm e}^{t\Delta}u_0$ to an initial datum $u_0\in M^p(\R^d)$ with $1< p <\infty$. Thus, we are obliged to consider weakly continuous ${\mathcal C}_{\rm w}\left([0,T], M^{d/2}(\R^d)\cap M^p(\R^d)\right)$ instead of norm continuous functions of time variable $t$ with values in a Morrey space. 
\end{remark} 

\begin{remark}\label{assumptions} 
Note that $u_C\in M^{d/2}(\R^d)$ and $|\!\!|u_C|\!\!|_{M^{d/2}}=\xn u_C\xn=2\s_d$. 
The second assumption $u_0\in M^p(\R^d)$, with   $p>\frac{d}{2}$, is a kind of regularity assumption which rules out local singularities of  strength $\frac{1}{|x|^2}$. 
Indeed, $\un_{\{|x|<R\}}u_C\not\in M^p(\R^d)$ while $\un_{\{|x|>R\}}u_C\in M^p(\R^d)$ with $p>\frac{d}{2}$.  
\end{remark}

\begin{remark}
In fact, if $u_0\in M^{d/2}(\R^d)$ is sufficiently small then the solution in Proposition~\ref{lok-istn} exists for all $t\in [0,\infty)$, see \cite[Thm. 1 B), C)]{Lem} with a proof involving homogeneous Morrey spaces of two indices. 
\end{remark}

\begin{proof}[Proof of Proposition \ref{lok-istn}.] 
We sketch the proof of this result noting that the reasonings are, in a sense, close to those in \cite[Prop. 1, Th. 1]{B-SM}. For $T>0$ and $p\in \left(\frac{d}{2}, d\right)$ consider an auxiliary space ${\mathcal X}_T$ defined in \rf{X_T_space}. We supplement the space ${\mathcal X}_T$ with the usual norm 
\begin{equation*}
  \| u\|_{{\mathcal X}_T} =\sup_{0\le t\le T} |\!\!| u(t) |\!\!|_{M^{d/2}}+ \sup_{0\le t\le T}|\!\!| u(t)|\!\!|_{M^p} + \sup_{0<t\le T}t^{\frac{d}{2p}}\| u(t)\|_{\infty}.
\end{equation*}
By estimates \rf{polgrupa},
we get ${\rm e}^{t\Delta}u_0\in{\mathcal X}_T$ whenever 
$u_0\in M^{d/2}(\R^d) \cap M^p(\R^d)$.  More precisely, we have
\be
\left\|{\rm e}^{t\Delta}u_0\right\|_{{\mathcal X}_T}\le C\Big(|\!\!| u_0|\!\!|_{M^{d/2}}+|\!\!| u_0|\!\!|_{M^p}\Big)\label{Xest}
\ee 
holds with a constant $C$ {\em independent} of $T$. 
To find solutions to equation \eqref{Duh}, it suffices  to show that the bilinear form ${\mathcal B}$ in \rf{B-form} is continuous in the norm of the ${\mathcal X}_T$. 

Let $u, w \in {\mathcal X}_T$ and $t \in [0, T]$. For $q\in\left\{\frac{d}{2},p\right\}$ we set $\nu = 1-\frac{q}{d} \in (0,1)$ and proceed in the following way using systematically inequalities \rf{polgrupa}--\rf{Sob}  
\begin{align*}
|\!\!| {\mathcal B}(u,w)(t)|\!\!|_{M^q}&\le C\int_0^t(t-s)^{-\frac{1}{2}}|\!\!| u(s)\nabla E_d* w (s)|\!\!|_{M^q}\ds\\
&\le C\int_0^t(t-s)^{-\frac{1}{2}}|\!\!| u(s)|\!\!|_{M^q}\|\nabla E_d * w(s)\|_\infty\ds\\
&\le C\int_0^t(t-s)^{-\frac{1}{2}}|\!\!| u(s)|\!\!|_{M^q}|\!\!| w(s)|\!\!|_{M^q}^{1-\nu}\| w(t)\|_\infty^{\nu}\ds\\
&\le C\int_0^t(t-s)^{-\frac{1}{2}}s^{-\nu\frac{d}{2p}} |\!\!| u(s)|\!\!|_{M^q}|\!\!| w(s)|\!\!|_{M^q}^{1-\nu}\left(s^{\frac{d}{2p}}\| w(t)\|_\infty\right)^\nu\\
&\le CT^{\frac{1}{2}-\nu\frac{d}{2p}}\times \sup_{0\le s \le T}|\!\!| u(s)|\!\!|_{M^q}\times \sup_{0\le s\le T}|\!\!| w(s)|\!\!|_{M^q}^{1-\nu}\times \|w\|_{{\mathcal Y}_T}^\nu .
\end{align*}
Here, for  $q=d/2$ we have $\frac{1}{2}-\nu\frac{d}{2q}=0$, while for $q=p>d/2$ we obtain $\frac{1}{2}-\nu\frac{d}{2p}>0$. 

We deal with the last part of the norm $\| \cdot \|_{\mathcal{X}_T}$ analogously using inequalities \rf{polgrupa} and \rf{polgrupa1} with $r\in (d, \infty)$, $p\in \left(\frac{d}{2},d\right)$ satisfying $\frac{1}{r}=\frac{1}{p}-\frac{1}{d}$ in the following way
\begin{align}\label{L_infty}
t^\frac{d}{2p}\| {\mathcal B}(u,&w)(t)\|_\infty \le  t^\frac{d}{2p} C\int_0^t (t-s)^{-\frac{1}{2}-\frac{d}{2r}}\| u(s)\|_\infty |\!\!|\nabla E_d* w (s)|\!\!|_{M^r} \ds\\  \nonumber
&\le Ct^\frac{d}{2p} \int_0^t (t-s)^{-\frac{1}{2}-\frac{d}{2r}}s^{-\frac{d}{2p}}\left(s^{\frac{d}{2p}}\| u(s)\|_\infty\right) |\!\!| w(s)|\!\!|_{M^p}\ds\\ \nonumber
&\le CT^{1-\frac{d}{2p}}\| u\|_{{\mathcal Y}_T}\times \sup_{0\le s\le T}|\!\!| w(s)|\!\!|_{M^p}\nonumber
\end{align}
because 
\begin{equation*}
\frac{d}{2p}-\frac{1}{2}-\frac{d}{2r}-\frac{d}{2p}+1=1-\frac{d}{2p}>0.
\end{equation*}
Now, a construction of local-in-time solutions is completed by the Banach fixed point argument used in the same way as stated, e.g., in \cite[Thm. 1]{Lem}, \cite[Lemma 5.1]{BCKZ}, \cite[Proposition 1]{B-SM}.

This local-in-time solutions satisfies $u\in L^\infty \big((\delta, T), L^\infty (\R^d)\big)$ for each $\delta >0$. Moreover, by inequality \rf{Sob0} with $p\in\left(\frac{d}{2},d\right)$ and $r=\infty$ we have got $\nabla E_d * u \in L^\infty \big((\delta, T), L^\infty (\R^d)\big)$. Hence, a standard regularity argument for parabolic equations permits us to prove smoothness of $u = u(x,t)$.
\end{proof}

\section{A priori estimates and continuation of solutions}\label{sec:apriori}

Now, we are going to obtain {\em a~priori} estimates of a local-in-time solution in Proposition~\ref{lok-istn} using its radial distribution function 
\be
 M(r,t)=\int_{\{|x|<r\}}u(x,t)\dx\label{rdf}
\ee
which satisfies the equation 
\be\label{mass}
\frac{\partial M}{\partial t}=M_{rr}-\frac{d-1}{r}M_r+\frac{1}{\sigma_d}r^{1-d}MM_r,
\ee 
with $M(0,t)=0$, cf. e.g. \cite{BHN}. 
Recall that   the radial  function $u=u(x)$ is related to $M(r)=\int_{\{|x|<r\}}u(x)\dx $ by the equality
\be
u(x)=\frac{1}{\sigma_d}r^{1-d}\frac{\partial}{\partial r}M(r),\label{u-rad}
\ee
 satisfied for each $|x|=r$. 

The estimate of the radial concentration $\xn u(t)\xn$ in Theorem \ref{glo} is a consequence of the following property of the function $M(r,t)$.

\begin{proposition}\label{maximum}
Let $d\ge 3$ and $p\in \left(\frac{d}{2}, d\right)$. Consider the nonnegative, sufficiently smooth, radial, local-in-time solution $u=u(x,t)$ on $[0,T]$ corresponding to a nonnegative radial initial datum $u_0 \in M^{d/2}(\R^d) \cap M^p(\R^d)$ (see Proposition \ref{lok-istn}). 
Suppose that
\be
M_0(r)<\min\left\{ K\, r^{d-d/p},\eps 2\s_d\, r^{d-2}\right\}\equiv b(r)\ \ \textit{for all}\quad r>0,\label{M0}
\ee
for some $K>0$ and $\eps \in \left(0, \frac{d}{2p}\right)$ where $\frac{d}{2p}<1$. Then the solution $M$ of equation \rf{mass} with the initial condition $M_0$ satisfies the same bound 
\be
M(r,t) = \int_{|x|<r} u(x,t) \dx  <\min\left\{ K\, r^{d-d/p},\eps 2\s_d\, r^{d-2} \right\}\ \ \textit{for}\quad r>0\ \ \textit{and}\quad t >0.\label{Mt}
\ee 
\end{proposition}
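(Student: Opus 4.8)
The plan is to run a comparison (maximum-principle) argument: verify that the time-independent barrier $b(r)=\min\{Kr^{d-d/p},\,\eps 2\s_d r^{d-2}\}$ is a \emph{strict} supersolution of the scalar parabolic equation \eqref{mass} away from the single crossover radius, and then propagate the strict inequality $M_0(r)<b(r)$ forward in time to $M(r,t)<b(r)$.

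\textbf{Step 1: the supersolution inequality.} Write $S[N]:=N_{rr}-\frac{d-1}{r}N_r+\frac1{\s_d}r^{1-d}NN_r$, so that \eqref{mass} reads $M_t=S[M]$ and a stationary supersolution is a function $N$ with $S[N]\le 0$. Put $a:=d-\tfrac dp$; since $p\in(\tfrac d2,d)$ we have $a\in(d-2,d-1)$ and $a-d+2=2-\tfrac dp>0$. The two pieces $b_1(r)=Kr^a$ and $b_2(r)=\eps 2\s_d r^{d-2}$ cross at $r_*:=\big(\tfrac{\eps 2\s_d}{K}\big)^{1/(a-d+2)}$, with $b=b_1$ on $(0,r_*)$ and $b=b_2$ on $(r_*,\infty)$. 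A direct computation gives
$$
S[b_1]=Ka\,r^{a-2}\Big(\tfrac{K}{\s_d}r^{a-d+2}-(d-a)\Big),\qquad
S[b_2]=\eps 2\s_d(d-2)\,r^{d-4}\big(2\eps-2\big).
$$
On $(0,r_*)$ one has $\frac{K}{\s_d}r^{a-d+2}<\frac{K}{\s_d}r_*^{a-d+2}=2\eps$, and the hypothesis $\eps<\tfrac{d}{2p}$ says exactly that $2\eps<\tfrac dp=d-a$, so $S[b_1]<0$ there; likewise $\eps<1$ gives $S[b_2]<0$ on $(r_*,\infty)$. Hence $b$ is a strict stationary supersolution on $(0,\infty)\setminus\{r_*\}$. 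Moreover at $r_*$ one checks $b_1'(r_*)=\tfrac{a}{r_*}b(r_*)>\tfrac{d-2}{r_*}b(r_*)=b_2'(r_*)$, i.e. $b$ has a concave kink at $r_*$.

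\textbf{Step 2: propagation in time.} By Proposition \ref{lok-istn} the solution $u$, hence $M(r,t)=\int_{|x|<r}u(x,t)\dx$, is classically smooth for $t\in(0,T)$, and $\phi:=M-b$ satisfies, for $r\ne r_*$,
$$
\phi_t\le \phi_{rr}-\frac{d-1}{r}\phi_r+\frac1{\s_d}r^{1-d}\big(\phi\phi_r+b\phi_r+b'\phi\big)+S[b],\qquad S[b]<0,
$$
with $\phi(\cdot,0)<0$. Suppose the conclusion fails and let $t_0\in(0,T]$ be the first time, $r_0\in(0,\infty)$ a point, with $\phi(r_0,t_0)=0$ while $\phi\le 0$ on $(0,\infty)\times[0,t_0]$. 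Then $r_0$ is an interior maximum of $\phi(\cdot,t_0)$, so $\phi_r(r_0,t_0)=0$ and $\phi_{rr}(r_0,t_0)\le 0$, while $\phi_t(r_0,t_0)\ge 0$ since $\phi(r_0,\cdot)<0$ for $t<t_0$. Inserting $\phi=\phi_r=0$ into the displayed inequality kills every term except $\phi_{rr}$ and $S[b]$, whence $0\le\phi_t(r_0,t_0)\le\phi_{rr}(r_0,t_0)+S[b(r_0)]\le S[b(r_0)]<0$, a contradiction — provided $r_0\ne r_*$. The corner is excluded because $b_1'(r_*)>b_2'(r_*)$ forces $\phi_r(r_*^-)<\phi_r(r_*^+)$, which is incompatible with $\phi(\cdot,t_0)$ having a local maximum at $r_*$.

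\textbf{Step 3: endpoints and regularity — where the work is.} Two points must be secured for Step 2 to be legitimate. First, the first touch must occur at an interior $r_0\in(0,\infty)$: as $r\to0^+$, smoothness gives $M(r,t_0)=O(r^d)$ while $b(r)=Kr^a$ with $a<d$, so $\phi(r,t_0)/r^a\to -K<0$ and the touch cannot reach the origin; the behavior as $r\to\infty$ is the more delicate end, and I would handle it either by first noting that $\phi\le 0$ already forces $\xn u(t)\xn\le\eps 2\s_d<2\s_d$ and feeding the resulting uniform bounds back in, or by localizing the comparison to $(0,R)$ with a separate (easy) check at $r=R$ and letting $R\to\infty$. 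Second, defining $t_0$ and asserting $\phi(\cdot,t_0)\le 0$ requires continuity of $t\mapsto M(r,t)$ down to $t=0$; this is available since $u\in\mathcal C_{\rm w}\big([0,T],M^{d/2}(\R^d)\cap M^p(\R^d)\big)$ makes each $M(r,\cdot)$ continuous and $u$ is classically smooth for $t>0$, so $\{\tau:\phi<0\text{ on }(0,\infty)\times[0,\tau]\}$ is a nonempty interval. The short supersolution computation of Step 1 is the heart of the matter, but the main obstacle is controlling the $r\to\infty$ behavior and matching the merely weak-in-time regularity near $t=0$ with the classical maximum-principle bookkeeping.
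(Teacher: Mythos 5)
Your proposal is correct and follows essentially the same route as the paper: a first-touching-point maximum-principle argument against the barrier $b(r)=\min\{Kr^{d-d/p},\,\eps 2\s_d r^{d-2}\}$, where the key computation is that each piece is a strict stationary supersolution of \eqref{mass} precisely under $2\eps<\tfrac{d}{p}$ (inner piece) and $\eps<1$ (outer piece), and the concave kink at $r_*$ rules out touching there. The paper phrases the same computation in the variables $z=r^{2-d}M$ and $\tilde z=r^{d/p-d}M$ hitting constant levels rather than in $\phi=M-b$, which is only a cosmetic difference; your Step 3 caveats about the behaviour as $r\to\infty$ and the weak continuity at $t=0$ are handled at the same (informal) level in the paper, which defers them to a cited "simple argument."
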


\begin{proof} 
The proof of inequality \rf{Mt} is by contradiction. 
Suppose that the function $M(r,t)$ hits the barrier $b(r)$ defined in \rf{M0} at some $(R_0,t_0)$ with $R_0>0$ and $t_0>0$ chosen as the least such moment $t>0$. The two parts of the graph of the barrier $b(r)$ meet at $r_\ast=\left(\frac{\eps 2\s_d}{K}\right)^{\frac{1}{2-d/p}}$. If $R_0\ge r_\ast$, then $z(r,t)=r^{2-d}M(r,t)$ hits the constant level $2\eps \s_d$ at $r=R_0$ and $t_0$. On the other hand, if $0<R_0< r_\ast$, then the function $\tilde z(r,t)=r^{d/p-d}M(r,t)$ hits the constant level $K$ at $r=R_0$ and $t_0$.

By a simple argument (see e.g. \cite[Thm. 2.4]{BKZ-NHM}), one may show that the function $z(r,t_0)$ attains its local maximum at $R_0$ where 
\be \label{cond-max}
\frac{\p}{\p t}z(R_0,t_0)\ge 0, \quad z_r(R_0,t_0)=0,\quad z_{rr}(R_0,t_0)\le 0.
\ee
These relations hold true for $\tilde z$, as well. 

Now, we use the equation for the function $z(r,t)=r^{2-d}M(r,t)$ which is obtained immediately from \rf{mass} and takes the form
\be 
(r^{d-2}z)_t=(r^{d-2}z)_{rr}-\frac{d-1}{r}(r^{d-2}z)_r+\frac{1}{\sigma_d}r^{1-d}(r^{d-2}z)(r^{d-2}z)_r. \label{zz}
\ee
Applying relations \rf{cond-max} and the property of $(R_0,t_0)$ we have got
\begin{align*}
\frac{\p}{\p t}z(R_0,t_0)&= z_{rr}(R_0,t_0)-2\frac{d-2}{R_0^2}z(R_0,t_0)+\frac{d-2}{\sigma_dR_0^2}z^2(R_0,t_0) \\
&\le \frac{d-2}{\sigma_dR_0^2}z(R_0,t_0)(z(R_0,t_0)-2\sigma_d)\\
&=\frac{4(d-2)}{R_0^2}\s_d \eps (\eps-1) <0
\end{align*}
because $\eps <1$. This is a contradiction with the first relation in \rf{cond-max}. 

In the case of the function $\tilde z$ (notice that $r^{d-d/p}\tilde z=M$), we consider the analogous equation 
\be
(r^{d-d/p}\tilde z)_t=(r^{d-d/p}\tilde z)_{rr}-\frac{d-1}{r}(r^{d-d/p}\tilde z)_r+\frac{1}{\sigma_d}r^{1-d}(r^{d-d/p}\tilde z)(r^{d-d/p}\tilde z)_r, 
\label{zzz}
\ee 
also equivalent to equation \rf{mass}. We use this equation at the point $(R_0,t_0)$ and apply relations \rf{cond-max} with $z$ replaced by $\tilde z$ to obtain
\begin{align*}
\frac{\p}{\p t}\tilde z(R_0,t_0)&= \tilde z_{rr}(R_0,t_0)-\tfrac{d/p(d-d/p)}{R^2}\tilde z(R_0,t_0) +\tfrac{d-d/p}{\s_d}R_0^{-d/p}\tilde z^2(R_0,t_0)\\
&\le \tfrac{d-d/p}{\s_d R_0^2}\tilde z(R_0,t_0)\left(-\tfrac{d}{p}\s_d + R_0^{2-d/p}\tilde z(R_0,t_0)\right)\\
&=\tfrac{d-d/p}{\s_d R_0^2}K \left(-\tfrac{d}{p}\s_d + r_\ast^{2-d/p}K\right)\\
&=\tfrac{d-d/p}{R_0^2}K \left(-\tfrac{d}{p} + 2\eps \right) <0
\end{align*}
since $R_0<r_\ast = \left(\frac{\eps 2\s_d}{K}\right)^{\frac{1}{2-d/p}}$ and $2\eps<\frac{d}{p}$. This is again a contradiction with first relation in \rf{cond-max}. 
 \end{proof}
 
We are in a position to complete the proof of our main result.

\begin{proof}[Proof of Theorem \ref{glo}]
Let us first recall that in the case of each nonnegative radial function $v$ following inequalities hold true
\begin{equation}\label{v_Morey}
  C |\!\!| v|\!\!|_{M_{p}} \le \sup_{R>0}R^{d(1/p-1)}\int_{\{|x|<R\}}v(x) \dx \le |\!\!|v|\!\!|_{M^{p}}
\end{equation}
with $p\in (1,d]$ and a number $C=C(p)\in (0,1)$, see \cite[Lemma 7.1]{BKZ2}. 

Let $u$ be a local-in-time solution on $[0,T]$ constructed in Proposition \ref{lok-istn} corresponding to a nonnegative radial initial datum $u_0\in M^{d/2}(\R^d)\cap M^p (\R^d)$ with $p\in \left( \frac{d}{2},d\right)$. Obviously, the function $u=u(x,t)$ is nonnegative and radial. In order to continue this solution globally in time, it suffices to show an \textit{a priori} estimate $\|u\|_{{\mathcal X}_T} <\infty$ for each $T>0$. 

First, using inequality \rf{v_Morey} we have got
\begin{equation*}
  M_0(r) \le r^{d-d/p}|\!\!|u_0|\!\!|_{M^p} <Kr^{d-d/p} \quad \text{for all} \quad r>0
\end{equation*}
with an arbitrary number $K>|\!\!|u_0|\!\!|_{M^p}$. Because of assumption \rf{a1} there exists $\eps \in (0,1)$ such that 
\begin{equation*}
  \int_{\{|x|<r\}} u_0(x) \dx <2\eps \s_dr^{d-2} \quad \text{for all}\quad r>0.
\end{equation*}

Now, we choose $p\in \left(\frac{d}{2},d\right)$ so close to $\frac{d}{2}$ in order to have $\eps<\frac{d}{2p}$. Thus, by Proposition~\ref{maximum} and the first inequality in \rf{v_Morey} we obtain
\begin{equation*}
  \sup_{0\le t<T}|\!\!| u(t) |\!\!|_{M^{d/2}} + \sup_{0\le t<T}|\!\!| u(t) |\!\!|_{M^p} <\infty .
\end{equation*}

Let us show an analogous estimate for the $L^\infty$-norm. Computing the $L^\infty$-norm of equation \rf{Duh} and following estimates in \rf{L_infty} with $p\in \left(\frac{d}{2},d\right)$ and $w=u$ we have
\begin{align*}
  \|u(t)\|_{\infty} \le Ct^{-\frac{d}{2p}}|\!\!| u_0|\!\!|_{M^p}+C\sup_{0\le s<T}|\!\!| u(s)|\!\!|_{M^p}\int_0^t (t-s)^{-\frac{1}{2}-\frac{d}{2p}}\| u (s)\|_\infty \ds.
\end{align*}
Hence, by the singular Gronwall lemma \cite[Lemma 1.2.9]{CD} we obtain
\begin{equation*}
  \sup_{0<t<T} t^{\frac{d}{2p}}\| u(t)\|_\infty <\infty .
\end{equation*}

By a standard continuation argument, the solution can be continued globally in time. The estimate \rf{global_sol} is an immediate consequence of Proposition \ref{maximum}.
\end{proof}

\section{Decay estimates} 
 Let us now show that our global-in-time finite mass solutions  decay in time.

\begin{proof}[Proof of Corollary \ref{asy-z}] 
It follows from Theorem \ref{glo} (in fact, from Proposition \ref{maximum}) that
\begin{equation}\label{M_est}
  M(r,t) = \int_{|x|<r} u(x,t) \dx <2\s_d r^{d-2} \quad \text{and}\quad M(r,t) \le Kr^{d-d/p}.
\end{equation}
Moreover, this is a bounded function because of the conservation of mass
\begin{equation}\label{M_0}
  M(r,t) \le \int_{\R^d} u(x,t) \dx = \int_{\R^d} u_0(x)\dx \equiv M_0\quad \text{for all}\quad t\ge 0.
\end{equation}
Obviously, we have $M_r(r,t) \ge 0$ for nonnegative $u(x,t)$. Thus, using equation \rf{mass} and the first inequality in \rf{M_est} we obtain 
\begin{align}
\frac{\p M}{\p t}&\le M_{rr}-\frac{d-1}{r}M_r+\frac{2\s_d}{2\s_d r}M_r\nonumber\\
&= M_{rr}-\frac{d-3}{r}M_r\le M_{rr}\label{heat}
\end{align}
since $d\ge 3$. Using the comparison principle and assumptions of Proposition \ref{maximum} we obtain that $0\le M(r,t) \le m(r,t)$, where $m=m(r,t)$ is a solution of the following initial-boundary value problem for the heat equation on the half line
\begin{align}\label{1C}
  m_t& =m_{rr}, &\quad &  (r,t)\in (0,\infty)\times(0,T), \nonumber\\
 m(0,t)&= 0 &\quad & {\rm for } \quad t\ge 0,\\
 m(r,0)&= m_0(r) &\quad & {\rm for }\quad r>0\nonumber\\
\intertext{with}
0\le m_0(r)&\le \min\left\{Kr^{d-d/p},2\s_d r^{d-2}, M_0\right\}&\quad & {\rm for }\quad r> 0\nonumber.
\end{align}
As it is well known, the solution of the initial-boundary value problem \rf{1C} can be represented by the formula
\begin{equation}\label{1heat} 
m(r,t)= \frac{1}{(4\pi t)^{1/2}}\int_0^{\infty }\left({\rm e}^{-(r-x)^2/(4t)}-{\rm e}^{-(r+x)^2/(4t)}\right)m_0(x)\dx.
\end{equation}
Our goal is to show that $\sup_{r>0}r^{2-d}m(r,t) \to 0$ as $t\to \infty$. To do it, let us fix $0<R_1<R_2<\infty$ with small $R_1$ and large $R_2$ to be determined later. 
By the second inequality in \rf{M_est} we have 
\begin{equation}\label{z1}
   r^{2-d}m(r,t) \le K\, r^{2-d/p}\le KR_1^{2-d/p}\quad \text{for} \quad r\in(0,R_1)
\end{equation}
since $\frac{d}{p}\in(1,2)$. Next, by inequality \rf{M_0}, we get 
\begin{equation}\label{z2}
  r^{2-d}m(r,t) \le M_0 r^{2-d}\le M_0 R_2^{2-d}\quad \text{for}\quad r\in(R_2,\infty),
\end{equation}
since $d\ge 3$. Finally, using equation \rf{1heat} we obtain 
\begin{align*}
 m(r,t) &\le  M_0 \frac{1}{(4\pi t)^{1/2}}\int_0^\infty\left({\rm e}^{-(r-y)^2/(4t)}-{\rm e}^{-(r+y)^2/(4t)}\right)\dy\\
&=  M_0 \frac{1}{(4\pi t)^{1/2}}\int_{-r}^r{\rm e}^{-\rho^2/(4t)}\,{\rm d}\rho\\
&= M_0 \frac{1}{(4\pi)^{1/2}}\int_{-\frac{r}{\sqrt{t}}}^{\frac{r}{\sqrt{t}}}{\rm e}^{-\varrho^2/4}\,{\rm d}\varrho
\end{align*}
which imply 
\begin{align}\label{z3}
\sup_{r\in[R_1,R_2]} r^{2-d}m(r,t) \le M_0R_1^{2-d}\frac{1}{(4\pi)^{1/2}}\frac{2R_2}{\sqrt{t}}. 
\end{align}
Putting together inequalities \rf{z1}, \rf{z2}, \rf{z3}, we arrive at 
\begin{equation*}
\limsup_{t\to\infty}\left( \sup_{r>0} r^{2-d}m(r,t)\right) \le KR_1^{2-d/p}+ M_0 R_2^{2-d}
\end{equation*}
and the right hand side can be done as small as we wish with a suitable choice of small $R_1$ and large $R_2$.

Therefore, $\lim_{t\to\infty}\sup_{r>0}r^{2-d}M(r,t) =0$ holds for every nonnegative, radial initial datum satisfying $u_0\in M^{d/2}(\R^d)\cap M^p(\R^d)\cap L^1(\R^d)$ and $\xn u_0\xn<2\s_d$. 
\end{proof}

To obtain the $L^p$-decay of solutions stated in  Corollary \ref{cor:Lp:decay}, we need a lemma which is an immediate consequence of the Gauss theorem.

\begin{lemma}[{\cite[Lemma 2.1]{BKZ} and  \cite{BZ-2}}]\label{potential}
Let $u\in L^1_{\rm loc}(\R^d)$ be a radially symmetric function,  
 such that  $v=E_d\ast u$ with $E_2(x)=-\frac1{2\pi}\log|x|$ and $E_d(x)=\frac1{(d-2)\sigma_d}|x|^{2-d}$ for $d\ge 3$, solves the Poisson equation $\Delta v+u=0$. Then the identity 
$$
\nabla v(x)\cdot x=-\frac1{\sigma_d}|x|^{2-d}\int_{\{|y|\le |x|\}} u(y)\dy
$$
holds. 
\end{lemma}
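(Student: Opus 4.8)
The plan is to reduce the identity to the classical Gauss (divergence) theorem applied on balls centred at the origin. First I would note that, since $u$ is radially symmetric and the fundamental solution $E_d$ is radial, the potential $v=E_d\ast u$ is itself radial; writing $v(x)=\varphi(|x|)$ we then have $\nabla v(x)=\varphi'(r)\,x/|x|$ with $r=|x|$, so that the left-hand side of the claimed formula is simply $\nabla v(x)\cdot x=r\,\varphi'(r)$. Thus the whole statement amounts to identifying $\varphi'(r)$.

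Next I would integrate the Poisson equation $\Delta v=-u$ over the ball $B_r=\{|y|<r\}$. On one hand, $\int_{B_r}\Delta v\dy=-\int_{\{|y|<r\}}u(y)\dy$; on the other hand, by the divergence theorem $\int_{B_r}\Delta v\dy=\int_{\partial B_r}\partial_n v\,{\rm d}S=\sigma_d r^{d-1}\varphi'(r)$, because the outward normal derivative of $v$ on the sphere $\partial B_r$ equals $\varphi'(r)$ and the $(d-1)$-dimensional measure of that sphere is $\sigma_d r^{d-1}$, with $\sigma_d$ as in \rf{pole}. Equating the two expressions gives $\varphi'(r)=-\frac1{\sigma_d}r^{1-d}\int_{\{|y|\le r\}}u(y)\dy$ (the sphere $\{|y|=r\}$ is Lebesgue-null, so $|y|<r$ may be replaced by $|y|\le r$), and multiplying by $r$ produces exactly the asserted identity $\nabla v(x)\cdot x=-\frac1{\sigma_d}|x|^{2-d}\int_{\{|y|\le|x|\}}u(y)\dy$.

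The point that requires care --- and the only real obstacle --- is the regularity of $v$: for $u\in L^1_{\rm loc}(\R^d)$ the equation $\Delta v+u=0$ holds only in the distributional sense, so the pointwise meaning of $\nabla v$ and the use of the divergence theorem must be justified. I would handle this by a routine mollification argument: with a radial mollifier $\rho_\varepsilon$ set $u_\varepsilon=u\ast\rho_\varepsilon$, which is smooth and radial, and $v_\varepsilon=E_d\ast u_\varepsilon$, which is smooth and radial, so that the computation above applies verbatim to $v_\varepsilon$. Letting $\varepsilon\to0$, the right-hand side converges since $u_\varepsilon\to u$ in $L^1_{\rm loc}$, while the left-hand side converges (for a.e. $x$, or locally uniformly away from the origin) by the mapping properties of the Riesz potential $\nabla E_d\ast\,\cdot\,$ already used in Section 3; passing to the limit yields the identity for $u$ itself. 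Apart from this approximation step, the proof is an immediate consequence of Gauss's theorem, as stated in the lemma.
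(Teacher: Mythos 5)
Your proof is correct and follows exactly the route the paper intends: the lemma is introduced there as ``an immediate consequence of the Gauss theorem'' (with the detailed argument delegated to \cite[Lemma 2.1]{BKZ}), and your computation --- integrating $\Delta v=-u$ over a ball, applying the divergence theorem, and using radiality to read off $\varphi'(r)$ --- is precisely that argument, with the mollification step a sensible extra precaution for $u\in L^1_{\rm loc}$.
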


\begin{proof}[Proof of Corollary \ref{cor:Lp:decay}.] 
Let $u=u(x,t)$ be a nonnegative, radial and sufficiently smooth solution of problem \rf{KS}. We multiply equation \rf{equ} by $u^{q-1}$ with some $q>1$ and after integrations by parts we obtain 
\begin{align*}
\frac1q\frac{\rm d}{{\rm d}t}\int u^q\dx &=
-(q-1)\int|\nabla u|^2u^{q-2}\dx +(q-1)\int u\nabla v\cdot\nabla u\, u^{q-2}\dx \\
&=-4\frac{q-1}{q^2}\int|\nabla u^{q/2}|^2\dx +2\frac{q-1}{q}\int u^{q/2}\nabla v\cdot\nabla u^{q/2}\dx\\
&\le -4\frac{q-1}{q^2}\int|\nabla u^{q/2}|^2\dx +2\frac{q-1}{q}\int \frac{u^{q/2}}{|x|}|x\cdot \nabla v||\nabla u^{q/2}|\dx.
\end{align*}
In the last line, we have use the inequality   $ |\nabla v |\leq |x|^{-1}|x\cdot \nabla v|$ valid for the radial function $v=v(x)$. 
Using Lemma \ref{potential} we have
\begin{equation*}
|x\cdot \nabla v(x,t)|=\frac{1}{\s_d}r^{2-d}M(r,t).
\end{equation*} 
Hence, recalling the Hardy inequality (see \cite{BEL}) 
\be
\frac{(d-2)^2}{4}\left\|\frac{f}{|x|}\right\|_2^2\le \|\nabla f\|_2^2\label{Ha}
\ee
we arrive at the estimate 
\begin{equation}\label{q_estimate}
\frac{\rm d}{{\rm d}t}\|u\|^q_q \le -4\frac{q-1}{q}\|\nabla u^{q/2}\|_2^2 +2(q-1)\frac{2}{q-2}\frac{1}{\s_d}\sup_{r>0}r^{2-d}M(r,t)\|\nabla u^{q/2}\|_2^2
\end{equation}
for sufficiently large $t>0$. Since, by Corollary \ref{asy-z}, we have got $\sup_{r>0} r^{2-d}M(r,t)\to 0$ as $t\to \infty$, we obtain
\begin{equation}\label{mu_estimate}
\frac{\rm d}{{\rm d}t}\|u\|^q_q+\mu \|\nabla u^{q/2}\|_2^2\le 0
\quad \text{with a constant}\quad
\mu>0
\end{equation}
for sufficiently large $t>0$. Using the Gagliardo-Nirenberg inequality
\begin{equation*}
  \|w\|_2^{2+s}\le C(d,p)\|\nabla w\|_2^2\|w\|_p^{s}  \quad \text{for}\quad 1<p<2\quad \text{and} \quad C(d,p)>0,
\end{equation*}
where $s=\frac{4p}{d(2-p)}$ we obtain
\begin{align*}
  \frac{\rm d}{{\rm d}t}\|u\|^q_q \le -\mu \|\nabla u^{q/2}\|_2^2 \le -C(d,p) \frac{\|u^{q/2}\|_2^{2+s}}{\|u^{q/2}\|_p^s}.
\end{align*}
We take here $p=\frac{2}{q}$ to get
\begin{align*}
  \frac{\rm d}{{\rm d}t}\|u\|^q_q \le -C(d,p) \frac{\|u\|_q^{1+s/2}}{\|u_0\|_1^{qs/2}}.
\end{align*}
 with $s=\frac{4}{d(q-1)}$ and $C(d,q, \mu)>0$, 
since the mass conservation property for nonnegative solutions gives $\|u(t)\|_1= \|u_0\|_1$.  
This leads to the differential inequality of the form
\begin{equation*}
  f'(t) \le -C \|u_0\|_1^{-qs/2}f^{1+s/2}(t)
\end{equation*} 
for the function $f(t)=\|u(t)\|^q_q$ and $C>0$, which finally gives an algebraic decay of the $L^q$-norm
\begin{equation*}
\|u(t)\|_q \le Ct^{-d/2(1-1/q)}\|u_0\|_1 \quad \text{for sufficiently large}\quad t>0.
\end{equation*}
\end{proof}

\begin{remark}[$L^q$-estimate] \label{cor:Lp}
The $L^q$-decay estimates from Corollary \ref{cor:Lp:decay} is obtained from the corresponding $L^q$-energy inequality \rf{mu_estimate} which can be also proved for nonintegrable initial data in higher dimensions. Indeed, coming back to estimate \rf{q_estimate} and using the estimate $r^{2-d}M(r,t)<2\s_d$ from Theorem \ref{glo} we obtain
\begin{equation*}
\frac{\rm d}{{\rm d}t}\|u\|^q_q+\mu \|\nabla u^{q/2} \|_2^2\le 0
\qquad
\text{with}\qquad  \mu=4(q-1)\left(\tfrac1q-\tfrac{2}{d-2}\right)>0
\end{equation*}
for $d\ge 5$ and  $1< q<\frac{d}{2}-1$.
\end{remark}

\bigskip

\appendix

\section{Complements on blowing up and special solutions}

Here, we discuss  some of  known results on the Keller-Segel model  \rf{equ}--\rf{eqv} which are related to Theorem \ref{glo}.

\subsection{Blowup of solutions}\label{blo}

We recall here sufficient conditions for nonexistence of solutions.
  \begin{theorem}\cite[Sec. 8]{BKZ2}\label{blow}
Consider a local-in-time, nonnegative,   classical, radially symmetric solution $u\in {\mathcal C}([0,T), L^1_{\rm loc}(\R^d))$ of   problem \rf{equ}--\rf{eqv} with a nonnegative radially symmetric initial datum $u_0\in  L^1_{\rm loc}(\R^d)$. 

\begin{itemize} 
\item[\textit{(i)}] There exists a constant $C_{d}>0$ such that if 
\be
\sup_{R>0}R^{2-d}\int_{\{|x|<R\}} u_0(x)\dx>C_{d},\label{(i)}
\ee 
then the solution $u$ cannot exists for all $t>0$.  
 \item[\textit{(ii)}]   If, moreover,  
\be
\limsup_{R\to 0}R^{2-d}\int_{\{|x|<R\}} u_0(x)\dx>C_{d},
\label{(ii)}
\ee then the solution $u(x,t)$ cannot be defined  on any time interval $[0,T]$ with some $T>0$.
\end{itemize} 
\end{theorem}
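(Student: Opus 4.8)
The plan is to push everything through the scalar parabolic equation \rf{mass} for the radial distribution function $M(r,t)=\int_{\{|x|<r\}}u(x,t)\dx$, in which the diffusion and the aggregating drift are most transparent, and then to run a \emph{one‑sided} comparison against an explicitly blowing‑up barrier. The three structural facts I would use are: equation \rf{mass} obeys a comparison principle (this is essentially what the touching argument in Proposition~\ref{maximum} and the bound \rf{heat} already exploit); if $M$ solves \rf{mass} then $\theta M$ is a \emph{subsolution} for every constant $\theta\ge 1$, because the quadratic drift $\frac1{\s_d}r^{1-d}MM_r$ picks up a factor $\theta^2$ while the linear part picks up only $\theta$ (and $M,M_r\ge 0$, $r^{1-d}>0$); and a pointwise minimum of subsolutions is again a subsolution. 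Applied to the explicit smooth blowing‑up solution $\bar M(r,t)=\dfrac{4\s_d\,r^d}{r^2+2(d-2)(T-t)}$ of Remark~\ref{ess} (which solves \rf{mass} and is nonnegative with $\bar M_r\ge 0$ for $t<T$), this yields a family of barriers $\underline M(r,t)=\min\{(1+\delta)\bar M(r,t),\,\kappa\}$ that are subsolutions of \rf{mass}, vanish at $r=0$, and concentrate a fixed amount of mass $\kappa$ at the origin as $t\uparrow T$.

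For part \textit{(i)}: given \rf{(i)} choose $R_0$ with $M_0(R_0)>C_dR_0^{d-2}$ and replace $u_0$ by $u_0\un_{\{|x|<R_0\}}$; this only decreases $M_0$ pointwise, hence by the comparison principle for \rf{mass} only decreases $M(r,t)$, so it suffices to produce blowup for the truncated, finite‑mass datum. Rescaling by $\lambda=R_0$ via \rf{scal:0} normalizes to $u_0$ supported in the unit ball, with mass $\mathcal M:=R_0^{2-d}M_0(R_0)>C_d$; the blowup time of the original solution is then $R_0^2$ times that of the rescaled one. After waiting a short time $t_1>0$ (so that the solution is smooth and strictly positive, hence $M(r,t_1)>0$ for every $r>0$, while mass conservation keeps $\lim_{r\to\infty}M(r,t_1)=\mathcal M$), I would pick the barrier parameters $\delta>0$, $\kappa\in\bigl((1-\eta)\mathcal M,\,\mathcal M\bigr)$, a large comparison radius $R$, and a large $T$ so that $\underline M(r,0)\le M(r,t_1)$ on $(0,R]$ and $\underline M(R,t)\le M(R,t)$ for $t\in[t_1,t_1+T]$. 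The comparison principle on the interval $(0,R)$ (with $M(0,t)=0$) then forces $M(r,t)\ge\underline M(r,t-t_1)$, and the concentration of $\underline M$ at the origin as $t\uparrow t_1+T$ is incompatible with $u(\cdot,t)$ remaining an $L^1_{\rm loc}$ (let alone classical) function; this simultaneously yields a finite bound on the blowup time. The constant $C_d$ is precisely what is needed to make such a choice of parameters feasible, and it will be far from optimal, in line with the discussion around $C_d\ge 4\s_d$ and $C_d<4\s_d\sqrt{\pi d}$.

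For part \textit{(ii)}: fix an arbitrary $T>0$ and suppose, for contradiction, that $u$ is defined on $[0,T]$. Using \rf{(ii)}, select $c\in(C_d,\,\limsup_{R\to0}R^{2-d}\int_{\{|x|<R\}}u_0\dx)$ and a sequence $R_n\to 0$ with $R_n^{2-d}\int_{\{|x|<R_n\}}u_0\dx>c$. The rescaled solutions $u_{R_n}$ from \rf{scal:0} are defined on $[0,R_n^{-2}T]$ and have initial radial concentration $>c>C_d$, so by part \textit{(i)} each of them blows up; since the excess over $C_d$ stays bounded below by $c-C_d>0$, the blowup time of $u_{R_n}$ is bounded above uniformly in $n$. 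But $R_n^{-2}T\to\infty$, contradicting the existence of $u_{R_n}$ on all of $[0,R_n^{-2}T]$ for $n$ large.

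The main obstacle is the parameter juggling in part \textit{(i)}: the barrier $(1+\delta)\bar M$ carries a nonzero radial concentration at every large scale, whereas a finite‑mass solution has $M(r,t)\,r^{2-d}\to 0$ as $r\to\infty$, so keeping $\underline M$ below $M$ at the far radius $R$ while still dominating it near the origin (where $M(r,t_1)$ decays like $r^d$) is a balancing act that can only be closed when the normalized mass $\mathcal M$ exceeds a definite threshold $C_d$ — this is what forces the (non‑sharp) constant in the statement. A secondary point requiring care is the instantaneous smoothing and positivity invoked at time $t_1$, which is what lets one handle initial data that may vanish near the origin, together with the verification that the $\min$‑truncated barrier is still a subsolution across its corner.
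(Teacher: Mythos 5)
Your strategy --- a one-sided comparison for the mass equation \rf{mass} against a lower barrier built from the explicit blowing-up solution of Remark \ref{ess} --- is genuinely different from the paper's proof, which is the localized moment (virial) argument of \cite{BKZ2}: one differentiates $w_R(t)=\int\psi_R(x)u(x,t)\,{\rm d}x$ with the bump \rf{bump}, derives a Riccati-type differential inequality, and concludes that $w_R$ would have to exceed its trivial upper bound $\int_{\{|x|<R\}}u(x,t)\,{\rm d}x$ in finite time. Two of your structural claims, however, fail. First, the pointwise \emph{minimum} of two subsolutions of \rf{mass} is not a subsolution; it is the \emph{maximum} of subsolutions (dually, the minimum of supersolutions) that is preserved. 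At the concave corner where $(1+\delta)\bar M$ crosses the constant $\kappa$, the barrier $\underline M=\min\{(1+\delta)\bar M,\kappa\}$ fails the viscosity subsolution test --- already for the heat equation, $\min\{x,-x\}=-|x|$ is a supersolution, not a subsolution --- and at such a corner the first-touching argument yields no usable sign conditions on $M_r$, $M_{rr}$, so the comparison $M\ge\underline M$ cannot be closed there. (Proposition \ref{maximum} in the paper takes the minimum of two \emph{upper} barriers, which is the consistent operation.)

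Second, and independently, even if $M(r,t)\ge(1+\delta)\bar M(r,t-t_1)$ were established up to $t=t_1+T$, it would produce no contradiction with $u(\cdot,t)\in L^1_{\rm loc}(\R^d)$, because the explicit solution does \emph{not} concentrate mass at the origin: $\bar M(r,T)=4\s_d r^{d-2}\to 0$ as $r\to 0$, which is exactly the mass function of the locally integrable density $2u_C$. The blowup in Remark \ref{ess} is an $L^\infty$ blowup at a single point with no atom forming, and a lower bound on $M$ does not transfer $L^\infty$ blowup of $\bar M_r$ to $u$. The most your comparison could give is $\limsup_{r\to 0}r^{2-d}M(r,t_1+T)\ge 4(1+\delta)\s_d$, i.e.\ a large concentration at the origin at a later time --- turning that into nonexistence is precisely the content of part \textit{(ii)}, which you in turn derive from part \textit{(i)}, so the argument is circular. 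Your scaling derivation of \textit{(ii)} from a quantitative version of \textit{(i)} is sound in itself (and is how the paper's reference proceeds), but both parts rest on the broken mechanism. The moment method sidesteps all of this because the contradiction is internal to the ODE for $w_R$ and needs no barrier, no corner, and no mass concentration.
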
  

This Theorem is proved in \cite{BKZ2} in the case of system \rf{equ-a}--\rf{eqv-a} with $\alpha\in(0,2]$. The proof involves  moments 
$$
w_R(t)=\int \psi_R(x)u(x,t)\dx\label{moment}
$$
defined with a continuous bump function $\psi$ and its rescalings for $R>0$ 
\be
\psi(x)=(1-|x|^2)_+^{1+\frac{\alpha}{2}}
=\left\{
\begin{array}{ccc}
(1-|x|^2)^{1+\frac{\alpha}{2}}& \text{for}& |x|<1,\\
0& \text{for}& |x|\geq 1,
\end{array}
\right.
\qquad  \psi_R(x)=\psi\bigg(\frac{x}{R}\bigg). \label{bump}
\ee 
We refer the reader to \cite[Proposition 2.6]{BZ-2}, for a proof that the following  qualitative sufficient conditions for blowup for the radial initial datum $u_0\ge 0$ in case $\alpha=2$ 
\begin{itemize}
\item $ \sup_{t>0}t {\rm e}^{t\Delta}u_0(0)>2$,

\item $\sup_{t>0}t\left\| {\rm e}^{t\Delta}u_0\right\|_\infty\gg 1$, 

\item $\xn u_0\xn\equiv \sup_{r>0}r^{2-d}\int_{\{|x|<r\}} u_0(x)\dx \gg 1$,

\item $|\!\!|u_0\mn2 \equiv \sup_{r>0, \,x\in\R^d}r^{2-d}\int_{\{|y-x|<r\}} u_0(y)\dy \gg1$, 
\end{itemize}
are mutually equivalent.


\subsection{Stationary solutions}\label{ss}

Let us now recall an  old result on stationary solutions of the Keller-Segel system.
\begin{theorem}\cite[Sec. 6]{B-AMSA}\label{oscillating}
Let $d\ge 3$. 
There exist radially symmetric stationary solutions $U=U(x)$, $V=V(x)$ with $\nabla V=\nabla E_d\ast U$ satisfying the relation 
$$
\lim_{R\to\infty}R^{2-d}\int_{|x|\le R}U(x)\dx = 2\s_d.
$$
If $3\le d \le 9$, the radial concentrations $r^{2-d}\int_{|x|\le r}U(x)\dx$ of these solutions oscillate in $r$ infinitely many times around the value $2\s_d$. 
\end{theorem}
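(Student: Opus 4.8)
The plan is to reduce the problem, via the classical Emden--Fowler (logarithmic) change of variables, to a planar autonomous ODE, and then read off all the asymptotics from a phase‑plane analysis. First I would reduce to a scalar equation: for a radially symmetric stationary pair $(U,V)$ with $\nabla V=\nabla E_d*U$, integrating the stationary form of \rf{equ} over a ball $\{|x|<r\}$ and using the divergence theorem together with radial symmetry gives $-U'(r)+U(r)V'(r)=0$ for $r>0$, hence $U=Ce^{V}$ for some $C>0$; replacing $V$ by $V+\log C$ (which changes neither $\nabla V$ nor the relation $\nabla V=\nabla E_d*U$) we may assume $\Delta V+e^{V}=0$ and $U=e^{V}$. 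Writing $w(r)=V(r)$, this becomes $w''+\frac{d-1}{r}w'+e^{w}=0$, and the task reduces to constructing a global solution of this ODE. By a standard fixed‑point argument adapted to the $\frac{d-1}{r}$ singularity at the origin there is, for each $a\in\R$, a unique smooth solution with $w(0)=a$, $w'(0)=0$; since $\bigl(r^{d-1}w'\bigr)'=-r^{d-1}e^{w}<0$ the solution is decreasing, so $w\le a$ and $|w'(r)|\le \tfrac{e^{a}}{d}\,r$, which precludes finite‑$r$ blow‑up and makes $w$ global on $(0,\infty)$. Using the scaling symmetry $w(r)\mapsto w(\lambda r)+2\log\lambda$ of the equation I fix $a=0$ (the other members of the family are obtained by this scaling and share the asymptotic properties below).

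Next I would introduce $s=\log r$ and $\xi(s)=w(e^{s})+2s$; note that $u_C$ corresponds to $w(r)=\log\bigl(2(d-2)\bigr)-2\log r$, i.e. to the constant value $\xi\equiv\xi_\ast:=\log\bigl(2(d-2)\bigr)$, so $\xi$ measures the deviation of $U$ from $u_C$ (indeed $U(x)=e^{\xi(s)}|x|^{-2}$). A direct computation turns the radial equation into the \emph{damped autonomous oscillator}
\be
\ddot\xi+(d-2)\dot\xi+e^{\xi}-2(d-2)=0 ,
\ee
whose unique equilibrium is $(\xi,\dot\xi)=(\xi_\ast,0)$. Writing $\dot\xi=p$, $\dot p=-(d-2)p-\bigl(e^{\xi}-2(d-2)\bigr)$ and using the Lyapunov function $E=\tfrac12 p^{2}+e^{\xi}-2(d-2)\xi$ one gets $\dot E=-(d-2)p^{2}\le 0$; since $\xi\mapsto e^{\xi}-2(d-2)\xi$ is coercive, $E$ has compact sublevel sets, so the global trajectory is bounded as $s\to+\infty$ and LaSalle's invariance principle forces $\xi(s)\to\xi_\ast$ and $\dot\xi(s)\to 0$. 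Finally, by the divergence theorem (equivalently Lemma \ref{potential}) one has $R^{2-d}\int_{\{|x|\le R\}}U\,dx=-\s_d R\,V'(R)$, which in the variable $s=\log R$ equals $\s_d\bigl(2-\dot\xi(s)\bigr)$, so this quantity tends to $2\s_d$ as $R\to\infty$ --- the first assertion.

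It remains to analyze the \emph{manner} of convergence, which is where the restriction $3\le d\le 9$ enters. Linearizing the oscillator at $(\xi_\ast,0)$ yields $\ddot\eta+(d-2)\dot\eta+2(d-2)\eta=0$ with characteristic roots $\lambda_\pm=\tfrac12\bigl(-(d-2)\pm\sqrt{(d-2)(d-10)}\bigr)$. For $3\le d\le 9$ the discriminant $(d-2)(d-10)$ is negative, so $\lambda_\pm$ are complex with negative real part $-(d-2)/2$: a hyperbolic stable focus. By the Hartman--Grobman theorem the local flow is topologically conjugate to this focus, hence the trajectory spirals into $(\xi_\ast,0)$, winding around it infinitely often; since every crossing of the line $\{p=0\}$ away from the equilibrium is transversal ($\dot p=-(e^{\xi}-2(d-2))\ne0$ there), $\dot\xi=p$ changes sign infinitely many times as $s\to\infty$, so $R^{2-d}\int_{\{|x|\le R\}}U\,dx=\s_d(2-\dot\xi)$ takes values both above and below $2\s_d$ on every neighbourhood of $R=\infty$. (For $d\ge 10$ the roots are real and one instead obtains eventual monotone convergence, consistent with the stated range of $d$.) I expect the main work to be the global phase‑plane part --- guaranteeing that the regular‑at‑the‑origin trajectory actually limits onto $(\xi_\ast,0)$ (handled by the Lyapunov function plus LaSalle) --- together with upgrading the linear spiral to genuinely infinitely many nonlinear oscillations (handled by Hartman--Grobman plus transversality of the $\{p=0\}$ crossings); once the Emden reduction is in place these are standard, and the computation is essentially that of \cite[Sec. 6]{B-AMSA} and of the classical Emden--Fowler theory.
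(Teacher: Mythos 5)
Your proposal is correct and, despite the different packaging, is essentially the paper's own argument: the paper passes to $\tau=\log r$ and the planar system $\dot X=(2-Z)X$, $\dot Z=X-(d-2)Z$ for $X=\tfrac1{\s_d}r^{3-d}M_r$, $Z=\tfrac1{\s_d}r^{2-d}M$, and your damped oscillator $\ddot\xi+(d-2)\dot\xi+e^{\xi}-2(d-2)=0$ is exactly this system under the change of variables $X=e^{\xi}$, $Z=2-\dot\xi$ (indeed $X=r^{2}U$ and $Z=-rV'$). Your Lyapunov function $E=\tfrac12 p^{2}+e^{\xi}-2(d-2)\xi$ coincides, up to an additive constant, with the paper's $L(X,Z)=\tfrac12(Z-2)^2+X-2(d-2)-2(d-2)\log\tfrac{X}{2(d-2)}$, and both give the same dissipation $-(d-2)p^{2}=-(d-2)(Z-2)^{2}$. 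What your route buys is a cleaner derivation (the Emden--Fowler reduction $U=e^{V}$, $\Delta V+e^{V}=0$, plus the scaling symmetry identifying the one-parameter family) at the cost of having to verify separately that the $V$ so constructed satisfies $\nabla V=\nabla E_d\ast U$ (immediate from the radial formula of Lemma \ref{potential}, but worth a line); the paper instead works directly with the mass equation \eqref{stats}, for which this normalization is built in.

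One technical slip: the Hartman--Grobman theorem provides only a \emph{topological} conjugacy, and a stable focus is topologically conjugate to a stable node, so topological linearization cannot by itself yield the infinitely many windings. The spiraling for $3\le d\le 9$ should instead be obtained by the standard direct argument (in polar coordinates centred at $(\xi_\ast,0)$ the angular velocity is bounded away from zero near the equilibrium because the eigenvalues are non-real and the nonlinearity is $o(\rho)$), or by a $C^{1}$-linearization theorem valid for planar $C^{2}$ fields. Combined with your (correct) transversality observation on $\{p=0\}$, this closes the oscillation claim; the paper's sketch asserts the same spiraling with no more justification.
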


We sketch below the proof, first introducing tools applied in \cite{B-AM,B-AMSA}. Time-independent solutions of equation \rf{mass} satisfy the following equation 
\be
M_{rr}-\frac{d-1}{r}M_r+\frac{1}{\sigma_d}r^{1-d}MM_r=0.\label{stats}
\ee
If $d\ge 3$ and $M\not\equiv 0$ then $\lim_{r\to\infty}M(r)=\infty$;  more precisely
\be 
\lim_{r\to\infty}r^{2-d}M(r)=2\s_d.\label{as-ss}
\ee
 Moreover, $ M(r)\le 2(d-1)2\s_dr^{d-2}$ which is obtained by multiplying equation \rf{stats} by $r^{d-1}$ and integrating on $[0,r]$. The change of variables $\tau=\log r$, $\dot{\ }=\frac{\rm d}{{\rm d}\tau}$ and 
$$
X(\tau)=\tfrac{1}{\s_d}r^{3-d}M_r(r),\ \ \ Z(\tau)=\tfrac{1}{\s_d}r^{2-d}M(r)$$ 
replaces the second order equation \rf{stats} with a dynamical system in the plane 
\bea
\dot X&=&(2-Z)X,\label{X}\\
\dot Z&=&X-(d-2)Z.\label{Z}
\eea
This system has two stationary solutions $(X,Z)=(0,0)$ and $(X,Z)=(2(d-2),2)$ and possesses  a Lyapunov function 
\be
 L(X,Z)=\tfrac12(Z-2)^2+\left(X-2(d-2)-2(d-2)\log\tfrac{X}{2(d-2)}\right)\label{Lya}
\ee
such that $\frac{\rm d}{{\rm d}\tau}L=-(d-2)(Z-2)^2$.  
The quadrant $\{X>0,\ Z>0\}$ is invariant for system \rf{X}--\rf{Z} so that 
each solution of  equation \rf{stats} with $M(0)\ge 0$, $M_r(0)\ge 0$ is positive and nondecreasing. 
We are interested in the, so-called, {\em eternal} solutions defined on the whole real line $\R\ni\tau$.

The linearization of system \rf{X}--\rf{Z} at the point $(2(d-2),2)$ has two  complex conjugate eigenvalues with negative real part if $3\le d\le 9$, and two negative real eigenvalues if $d\ge 11$.  
Hence,  there exists a separatrix joining the unstable point $(0,0)$ (when $\tau\to-\infty$) with the  stable point $(2(d-2),2)$ (when $\tau\to\infty$).  Its slope at the origin is equal to $1/d$. 
Observe that if $3 \le d \le 9$ then $r^{2-d}M(r)$  is not monotone, since the separatrix turns around the point $(2(d-2), 2)$ infinitely many times. 
For $d = 2$ and $d \ge 10$, there is a~unique scroll of such a curve, since the
eigenvalues of the linearization of the vector field at $(2(d-2), 2)$ are real.

It can be shown that the separatrix joining the stationary points $(0, 0)$ and $(2(d-2), 2)$ is the unique trajectory with $X(\tau) > 0$ for all $\tau\in \R$.  
and this satisfies  $X(-\infty)=0$, $Z(-\infty)=0$. 
Therefore, all the stationary solutions are parametrized by, say, their values $Z(\tau)$  at $\tau=0$ corresponding to $\frac{1}{\s_d}M(r)$ at $r=1$. This determines the number of such stationary solutions with a given value $M(1)$. 
For $3\le d\le 9$ there might be multiple stationary solutions with the same value of $M(1)$. 
All of them satisfy the relation $\lim_{\tau\to\infty}Z(\tau)=2$, so that relation \rf{as-ss} follows.


\subsection{Selfsimilar solutions}\label{sss}
The positive radially symmetric selfsimilar solutions, emanating from  initial data multiples of order $\frac{1}{|x|^2}$, do not decay as $t\to\infty$ in the sense of their radial concentration. 
Indeed, they are of the form 
$$u(x,t)=\frac{1}{t}U\left(\frac{|x|}{\sqrt{t}}\right)$$ 
with a profile $U$ and $u_0(x)=\varepsilon u_C$ with some (small) $\varepsilon>0$, see Remark \ref{selfsim}. 
 Thus, they do not satisfy assumptions of Theorem \ref{glo}. 
 According to  to the result in \cite[Theorem 1 B), C)]{Lem} mentioned in the Introduction (see also \cite[Theorem 3]{B-SM}, \cite[Theorem 4]{B-AMSA}) they are small in the space $M^{d/2}(\R^d)$ 
 so that the corresponding functions $z$ and their profiles $\zeta$   keep small values with respect to $r$ (w.r.t. $y$, resp.) below  $\eps2\s_d$,  at least for very small $0<\eps\ll 1$.  
Here we gather some facts on the selfsimilar solutions whose proofs can be found in \cite{B-SM,B-AM}. 

Solutions invariant under the scaling leaving system \rf{equ}--\rf{eqv} unchanged are of the form $u(x,t)=\frac1tU\left(\frac{x}{\sqrt{t}}\right)$ and their initial data are homogeneous of degree $-2$. Thus, radial positive selfsimilar solutions which are not stationary have as their initial values $u_0=\eps u_C$ with small $\eps>0$, anyway $\eps<1$. 

For  them,  the integrated density is of the form $M(r,t)=\s_dt^{d/2-1}\zeta\left(\frac{r^2}{t}\right)$, and  the profile $\zeta=\zeta(y)$, $y=\frac{r^2}{t}$, $'=\frac{\rm d}{{\rm d}y}$, satisfies 
\be
\zeta''+\tfrac14\zeta'-\tfrac{d-2}{2y}\zeta'-\tfrac{d-2}{8y}\zeta+\tfrac{1}{2y^{d/2}}\zeta\zeta'=0,\ \ \ \zeta(0)=0,\ \ \ \zeta(y)\approx 2\eps y^{d/2-1},\ y\to\infty.\label{zeta}
\ee
It can be shown that each solution of problem \rf{zeta} verifies estimates 
\bea
\zeta(y)&\le& \left(1-\tfrac{2}{d}\right)y^{d/2}+4(d-1)y^{d/2-1},\label{es1}\\
\quad&& \lim_{y\to\infty}y^{1-d/2}\zeta(y)\in(0,\infty).\label{es2}
\eea
The  change of variables $\tau=\frac12\log y$, $\dot{\ }=\frac{\rm d}{{\rm d}\tau}$, 
$$
X(\tau)=2y^{2-d/2}\zeta'(y),\ \ \ Z(\tau)=y^{1-d/2}\zeta(y),
$$ 
similar to that in Appendix \ref{ss},  leads to a nonautomous system in the plane 
\bea
\dot X&=&(2-Z)X+\tfrac{{\rm e}^{2\tau}}{2}((d-2)Z-X), \label{Xs}\\
\dot Z&=&X-(d-2)Z. \label{Zs}
\eea
Selfsimilar solutions emanate from the origin $(0,0)$ and terminate at the points 
\newline $(2(d-2)\eps,2\eps)$, $\eps\ll 1$, moving in the sector $\{ X>(d-2)Z\}$ since $\dot Z(\tau)>0$. 



   \end{document}